\documentclass[a4paper,12pt]{amsart}
\usepackage[dvipdfmx]{graphicx}
\usepackage{latexsym,amssymb,amsmath,mathrsfs}
\usepackage{amsthm}
\usepackage{mathrsfs}
\usepackage{color}
\usepackage[english]{babel}
\usepackage{comment}
\usepackage[abbrev]{amsrefs}
\usepackage[top=30truemm,bottom=30truemm,left=20truemm,right=20truemm]{geometry}
\newtheorem{theorem}{Theorem}[section]

\newtheorem{proposition}[theorem]{Proposition}
\newtheorem{corollary}[theorem]{Corollary}
\theoremstyle{remark}
\newtheorem{remark}[theorem]{Remark}
\DeclareMathOperator{\Ric}{Ric}

\newcommand{\e}{\mathrm{e}}
\newcommand{\Hess}{\mathrm{Hess}\,}
\renewcommand{\d}{\mathrm{d}}

\title[Steklov eigenvalue estimates for affine connections]{A Steklov eigenvalue estimate for affine connections and its application to substatic triples}
\author{Yasuaki Fujitani}
\keywords{Steklov eigenvalue, weighted Ricci curvature, substatic condition}

\address{Department of Mathematics, Osaka University, Toyonaka, Osaka, 560-0043, Japan \linebreak (Present address: Graduate School of Mathematical Sciences, The University of Tokyo, Komaba, Tokyo, 153-8914, Japan)} 
\email{yasuakifujitani@g.ecc.u-tokyo.ac.jp}
\begin{document}
\maketitle
\begin{abstract}
Choi-Wang obtained a lower bound of the first eigenvalue of the Laplacian on closed minimal hypersurfaces.
On minimal hypersurfaces with boundary,
Fraser-Li established an inequality giving a lower bound of the first Steklov eigenvalue as a counterpart of the Choi-Wang type inequality.
These inequalities were shown under lower bounds of the Ricci curvature.
In this paper,
under non-negative Ricci curvature associated with an affine connection introduced by Wylie-Yeroshkin,
we give a generalization of Fraser-Li type inequality.
Our results hold not only for weighted manifolds under non-negative $1$-weighted Ricci curvature but also for substatic triples.
\end{abstract}
\section{Introduction}
For an $n$-dimensional Riemannian manifold $(M,g)$ and $f \in C^{\infty}(M)$,
we consider the \textit{weighted measure} $\mu := \e^{-f}v_g$,
where $v_g$ is the Riemannian volume measure.
The triple $(M,g,f)$ is called a \textit{weighted Riemannian manifold}.
For $N \in (-\infty,1]\cup [n,\infty]$,
we define the \textit{$N$-weighted Ricci curvature} by
\begin{align*}
    \Ric_f^N := \Ric + \Hess f - \frac{\d f\otimes \d f}{N - n},
\end{align*}
where we only consider a constant function $f$ if $N = n$,
and the last term vanishes if $N = \infty$.
We have 
\begin{align*}
    \Ric_f^\infty \leq \Ric_f^1.
\end{align*}
Hence,
we see that the condition $\Ric_f^1 \geq K g$ is weaker than the condition $\Ric_f^\infty \geq K g$.
In the weighted case with $N = \infty$,
Wei-Wylie \cite{WW} obtained a Bishop-Gromov type volume comparison theorem and Fang-Li-Zhang \cite{FLZ} obtained a Cheeger-Gromoll type splitting theorem for the case $N =\infty$ (see also \cite{L,WW}). 
Later,
in the case $N = 1$,
Wylie \cite{W2} obtained a splitting theorem of Cheeger-Gromoll type and Wylie-Yeroshkin \cite{WY} obtained a volume comparison theorem of Bishop-Gromov type.
Moreover,
for $\varphi := \frac{f}{n-1}$,
they introduced an affine connection:
\begin{align*}
    \nabla^\varphi_X Y := \nabla_X Y - \d \varphi(X)Y - \d \varphi(Y)X.
\end{align*}
We call this \textit{Wylie-Yeroshkin type affine connection}.
Once we have an affine connection,
we may define the Ricci curvature associated with it (see e.g., \eqref{eq:affine-ricci}), 
which we call the \textit{affine Ricci curvature}.
Wylie-Yeroshkin \cite{WY} revealed that $\Ric_f^1$ coincides with the affine Ricci curvature associated with $\nabla^\varphi$.
Later,
Li-Xia \cite{LX} gave a further generalization of $\nabla^\varphi$ and introduced an affine connection:
\begin{align}\label{eq:def-affine}
    D^{\alpha,\beta}_X Y := \nabla_XY - \alpha \d f(X)Y - \alpha \d f(Y)X + \beta g(X,Y)\nabla f
\end{align}
for $\alpha ,\beta \in \mathbb{R}$.
We see that $D^{\frac{1}{n-1},0}$ coincides with the Wylie-Yeroshkin type affine connection.
This enlightened a relationship between the $1$-weighted Ricci curvature and the substatic condition:
\begin{align}\label{eq:sustatic-condition}
    V \Ric - \Hess V + (\Delta V)g \geq 0
\end{align}
with positive $V\in C^{\infty}(M)$.
Indeed,
the affine Ricci curvature $\Ric^{D^{0,1}}$ associated with $D^{0,1}$
satisfies
\begin{align*}
    \Ric^{D^{0,1}} = \Ric - \frac{\Hess V}{V} + \frac{\Delta V}{V}g
\end{align*}
for $V := \e^{f}$.
The right-hand side is called the \textit{static Ricci tensor}.
We see that the non-negativity of the static Ricci tensor implies the substatic condition.
If $((M,g),V)$ satisfies \eqref{eq:sustatic-condition},
it is called a \textit{substatic triple}.
Examples of substatic triples include \textit{deSitter-Schwarzschild manifold} and \textit{Reissner-Nordstr\"{o}m manifold} (see e.g., Brendle \cite{B3}).
Recently,
some comparison geometric properties for substatic triples such as a volume comparison theorem and a splitting theorem were obtained by Borghini-Fogagnolo \cite{BF}.
In addition,
they obtained an isoperimetric inequality by using a Willmore type inequality on non-compact substatic triples.
In \cite[Appendix]{BF},
they pointed out that the non-negativity of the $1$-weighted Ricci curvature is equivalent to the substatic condition after a suitable conformal change.
Indeed,
after the conformal change,
$D^{0,1}$ can also be regarded as a Wylie-Yeroshkin type affine connection (see e.g., Proposition \ref{prop:dual-relation}).
Hence,
the static Ricci tensor can be considered as the $1$-weighted Ricci curvature for some weighted manifold,
which yields the same conclusion as in \cite[Appendix]{BF}.

In this paper,
we investigate lower bounds of the first Steklov type eigenvalue on hypersurfaces with boundary under non-negative affine Ricci curvature associated with Wylie-Yeroshkin type affine connection.
Since this condition is equivalent to $\Ric_f^1\geq 0$ and also to the substatic condition \eqref{eq:sustatic-condition},
our results also hold true for both weighted Riemannian manifolds under $\Ric_f^1\geq 0$ and substatic triples.
As an application,
we also show a compactness theorem for hypersurfaces with boundary in smooth topology.

The Steklov eigenvalue estimate for hypersurfaces with boundary in this paper can be seen as a counterpart of the Choi-Wang type inequality for hypersurfaces without boundary.
Here,
we first introduce Choi-Wang type inequalities.
For Riemannian manifolds under lower bounds of Ricci curvature,
a lower bound of the first eigenvalue of the Laplacian on minimal hypersurfaces was first obtained by Choi-Wang \cite{CW}.
As an application,
Choi-Schoen \cite{CS} showed a compactness theorem for minimal hypersurfaces.
Moreover,
a compactness theorem for self-shrinkers was obtained by Colding-Minicozzi \cite{CM3}.
After that,
Ding-Xin \cite{DX} gave a further generalization of them.

These results on Choi-Wang type inequalities have been generalized to those in weighted settings.
For $(M,g,f)$,
the Laplacian is generalized to the \textit{weighted Laplacian} as follows:
\begin{align*}
    \Delta_f = \Delta - g(\nabla f,\nabla \cdot).
\end{align*}
For an immersed hypersurface $\Sigma$ and a unit normal vector field $\nu$ on $\Sigma$,
the mean curvature is generalized to the \textit{weighted mean curvature}:
\begin{align*}
    H_{f,\Sigma} = H_\Sigma - f_\nu,
\end{align*}
where
$f_\nu := g(\nabla f,\nu)$ and $H_\Sigma$ is the mean curvature on $\Sigma$.
We say that $\Sigma$ is \textit{$f$-minimal} if $H_{f,\Sigma} \equiv0$.
It should be noted that self-shrinkers in Euclidean spaces are $f$-minimal hypersurfaces if we take an appropriate function as $f$.
In the context of Choi-Wang type inequalities in the weighted setting,
the first eigenvalue of the weighted Laplacian on $f$-minimal hypersurfaces has been investigated under lower bounds of $\Ric_f^N$.
In the weighted case with $N = \infty$,
Li-Wei \cite{LW} obtained a Choi-Wang type inequality for compact manifolds (see also Ma-Du \cite{MD}),
and they generalized a Choi-Schoen type compactness theorem.
After that,
also in the case $N = \infty$,
Cheng-Mejia-Zhou \cite{CMZ} showed it for non-compact manifolds.
In the case $N = 0$,
a further generalization was obtained by \cite{FS}.
As far as we know,
any Choi-Wang type inequality for the case $N = 1$ has not yet been obtained.

We now turn to the first Steklov eigenvalue estimate.
There is a growing interest in minimal hypersurfaces with boundary.
Especially,
the research on minimal hypersurfaces without boundary has been generalized to those for minimal hypersurfaces with free boundary.
In particular,
under non-negative Ricci curvature,
Fraser-Li \cite{FL} 
showed a Choi-Schoen type compactness theorem for minimal hypersurfaces with free boundary.
Instead of the Choi-Wang inequality,
they showed a lower bound of the first Steklov eigenvalue,
and used it to show the compactness theorem.
We note that an isoperimetric inequality was also used in \cite{FL}.
These results were also generalized to those in weighted settings under non-negative $\Ric_f^N$.
On an immersed hypersurface $\Sigma$ with boundary,
the $f$-Steklov eigenvalue problem is as follows:
\begin{align}\label{eq:steklov-problem}
    \begin{cases}
        \Delta_{f,\Sigma} \,u = 0 &\mbox{ on } \Sigma,\\
        u_\nu = \lambda u &\mbox{ on }\partial \Sigma,
    \end{cases}
\end{align}
where $\nu$ is the outer unit normal vector field on $\partial \Sigma$.
Barbosa-Wei \cite{BW} generalized inequalities of Fraser-Li type to the weighted case with $N = \infty$.
In particular,
they obtained a lower bound of the first $f$-Steklov eigenvalue.
The aim of this paper is to generalize results in \cite{BW} to those under non-negative Ricci curvature associated with Wylie-Yeroshkin type affine connections.

On weighted manifolds,
while the Choi-Wang inequality has not yet been generalized to the case $N = 1$,
we obtain Fraser-Li type inequalities in the case $N = 1$.
On substatic triples,
another type of the Steklov eigenvalue problem is known in Huang-Ma-Zhu \cite{HMZ},
which is different from \eqref{eq:steklov-problem}.
Our Fraser-Li type inequality also gives a lower bound of the first eigenvalue of the boundary value problem in \cite{HMZ}.

We review organizations.
In section \ref{sec:tool},
we prepare tools for subsequent sections.
In particular,
we present a Reilly formula (Proposition \ref{prop:d-reilly}) and a second variation formula for the area (Proposition \ref{prop:second-variation-area-1-Ricci}).
Also,
we address the relation between the static Ricci tensor and the affine Ricci curvature associated with the Wylie-Yeroshkin type affine connection (Proposition \ref{prop:dual-relation}).
In section \ref{sec:isoperimetric},
we show a Fraser-Li type isoperimetric inequality (Theorem \ref{thm:FL-isoperimetric}),
and show the existence of minimal hypersurfaces with free boundary as a byproduct (Corollary \ref{cor:BW-existence}).
Furthermore,
we explicitly write down a Fraser-Li type isoperimetric inequality for substatic triples (Corollary \ref{cor:substatic-isoperimetric}).
In section \ref{sec:frankel},
we obtain a Frankel type property (Proposition \ref{prop:free-frankel}).
In section \ref{sec:steklov},
as an application of results in previous sections,
we give a lower bound of the first Steklov eigenvalue (Theorem \ref{thm:steklov-1-ricci}) and obtain a compactness theorem (Theorem \ref{thm:compactness}) of Fraser-Li type.
As applications,
we give a lower bound of the first Steklov eigenvalue associated with $D^{0,1}$ (Corollary \ref{cor:steklov-substatic}) and a compactness theorem for minimal surfaces in substatic triples (Corollary \ref{cor:substatic-compactness}).
These results also hold true under $\Ric_f^1\geq 0$,
and can be regarded as a generalization of results in \cite{BW} to the weighted case $N = 1$.

\section{Several formulas}\label{sec:tool}
We show several formulas which are useful in the following sections.
Let $(M,g)$ be an $n$-dimensional compact Riemannian manifold with boundary and $\varphi\in C^\infty(M)$.
We set $D := \nabla^\varphi$ and $\mu := \e^{-(n-1)\varphi}v_g$.
We denote the outer unit normal vector field on $\partial M$ by $\nu(\partial M)$.
For $\nu := \nu(\partial M)$,
we set
\begin{align*}
    \mathrm{II}_{\partial M}(X,Y) = g( \nabla_X \nu,Y),\quad  H_{\partial M} = \mathrm{tr}\mathrm{II}_{\partial M}.
\end{align*}
Li-Xia \cite{LX} defined the \textit{$D$-mean curvature} by 
\begin{align*}
    H_{\partial M}^{D} = H_{\partial M} - (n-1)\varphi_{\nu}.
\end{align*}
Let $\Sigma$ be an immersed hypersurface.
For a unit normal vector field $\nu(\Sigma)$ on $\Sigma$,
we define $\mathrm{II}_\Sigma$,
$H_{\Sigma}$ and $H^D_{\Sigma}$ in the same manner as above.
We say $\Sigma$ is \textit{$D$-minimal} if $H^{D}_{\Sigma}\equiv 0$.
We define the \textit{$D$-Riemannian curvature tensor} and \textit{$D$-Ricci curvature} by 
\begin{align}\label{eq:affine-ricci}
    R^D (X,Y)Z &:= D_X D_Y Z - D_Y D_X Z - D_{[X,Y]}Z,\quad \Ric^D (X,Y) := \sum_{i = 1}^n g(R^D(X,E_i)E_i,Y),
\end{align}
where $\{E_i\}_{i = 1}^n$ is an orthonormal frame of the tangent bundle.
%
For $D$-Ricci curvature,
we have the following Reilly formula:
\begin{proposition}\label{prop:d-reilly}
Let $(M,g)$ be an $n$-dimensional compact Riemannian manifold and $\varphi \in C^{\infty}(M)$.
Also,
let $\Omega$ be a compact set with piecewise smooth boundary $\partial \Omega = \cup_{i = 1}^l \Sigma_i$,
and $S := \cup_{i = 1}^l \partial \Sigma_i$.
For $\phi \in C^0(\Omega) \cap C^{\infty}\left( \Omega \backslash S \right)$,
we assume that there exists a constant $C > 0$ such that 
\begin{align*}
    \| \phi\|_{C^3(\Omega')} \leq C
\end{align*}
for any set $\Omega'$ in the interior of $\Omega\backslash S$. 
Then we have
\begin{align*}
    &\int_{\Omega}  \left\{ \left(\Delta \phi - ng( \nabla \varphi, \nabla \phi )\right)^2 - \left| \mathrm{Hess}\, \phi - g( \nabla \varphi, \nabla\phi ) \right|^2 - \Ric^D\left( \nabla \phi, \nabla \phi \right) \right\}\ \d \mu\\
    &= \sum_{i = 1}^l  \int_{\Sigma_i} \left( H^D_{\Sigma_i}\,\phi_{\nu(\Sigma_i)}^2 + \mathrm{II}_{\Sigma_i} \left( \nabla_{\Sigma_i}\,\psi,  \nabla_{\Sigma_i}\,\psi \right) \right) \ \d \mu_{\Sigma_i}\\
    &\,\,\quad + \sum_{i = 1}^l \int_{\Sigma_i} \left(  \phi_{\nu(\Sigma_i)}  \Delta_{(n-1)\varphi, \Sigma_i} \psi  -  g_{\Sigma_i}\left(\nabla_{\Sigma_i}\,\psi, \nabla_{\Sigma_i}\, \phi_{\nu(\Sigma_i)}   \right) \right) \ \d \mu_{\Sigma_i},
\end{align*}
where we set $\psi := \phi|_{\partial M}$,
$D := \nabla^\varphi$ and $\mu := \e^{-(n-1)\varphi}v_g$.
\end{proposition}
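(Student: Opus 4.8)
The plan is to reduce the statement to the Bochner formula for the weighted Laplacian $\Delta_f := \Delta - g(\nabla f,\nabla\cdot)$ with $f:=(n-1)\varphi$, so that $\mu=\e^{-f}v_g$, and then to translate the conclusion into the language of $D=\nabla^\varphi$ by a single pointwise algebraic identity. On $\Omega\setminus S$, where $\phi$ is smooth, the classical Bochner formula combined with $\Delta_f=\Delta-g(\nabla f,\nabla\cdot)$ gives
\[
  \tfrac12\,\Delta_f|\nabla\phi|^2 = \left|\Hess\phi\right|^2 + g(\nabla\phi,\nabla\Delta_f\phi) + \Ric_f^\infty(\nabla\phi,\nabla\phi).
\]
First I would integrate this over $\Omega$ with respect to $\mu$, using the divergence identity $\int_\Omega\Delta_f w\,\d\mu=\sum_{i}\int_{\Sigma_i}w_{\nu(\Sigma_i)}\,\d\mu_{\Sigma_i}$ on the piecewise smooth boundary and integrating by parts in the term $g(\nabla\phi,\nabla\Delta_f\phi)$, to reach the weighted Reilly identity
\[
  \int_\Omega\left[(\Delta_f\phi)^2-\left|\Hess\phi\right|^2-\Ric_f^\infty(\nabla\phi,\nabla\phi)\right]\d\mu
  =\sum_{i=1}^l\int_{\Sigma_i}\left(\phi_{\nu(\Sigma_i)}\,\Delta_f\phi-\tfrac12\,\partial_{\nu(\Sigma_i)}|\nabla\phi|^2\right)\d\mu_{\Sigma_i}.
\]

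Next I would evaluate each boundary integrand purely locally on $\Sigma_i$. Decomposing $\nabla\phi$ and $\nabla\varphi$ into their $\Sigma_i$-tangential and normal parts and using the standard hypersurface relations $\Hess\phi(\nu,\nu)=\Delta\phi-\Delta_{\Sigma_i}\psi-H_{\Sigma_i}\phi_\nu$ and $\Hess\phi(\nu,\nabla_{\Sigma_i}\psi)=g_{\Sigma_i}(\nabla_{\Sigma_i}\phi_\nu,\nabla_{\Sigma_i}\psi)-\mathrm{II}_{\Sigma_i}(\nabla_{\Sigma_i}\psi,\nabla_{\Sigma_i}\psi)$ (with $\nu=\nu(\Sigma_i)$ and $\psi$ the restriction of $\phi$ to $\Sigma_i$), together with $\phi_\nu(\Delta_f\phi-\Delta\phi)=-(n-1)\phi_\nu\,g(\nabla\varphi,\nabla\phi)$ and $g(\nabla\varphi,\nabla\phi)=g_{\Sigma_i}(\nabla_{\Sigma_i}\varphi,\nabla_{\Sigma_i}\psi)+\varphi_\nu\phi_\nu$, the $\varphi$-terms recombine exactly so that $H_{\Sigma_i}$ is promoted to $H^D_{\Sigma_i}$ and $\Delta_{\Sigma_i}$ to $\Delta_{(n-1)\varphi,\Sigma_i}$; this yields precisely the right-hand side of the statement. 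Note that no integration by parts is carried out on $\Sigma_i$ itself, so no codimension-two terms along $S$ are produced. It then remains to rewrite the left-hand side: expanding the squares and using the Wylie--Yeroshkin identity $\Ric^D=\Ric_f^1=\Ric+(n-1)\Hess\varphi+(n-1)\,\d\varphi\otimes\d\varphi$, one checks the pointwise equality
\[
  (\Delta_f\phi)^2-\left|\Hess\phi\right|^2-\Ric_f^\infty(\nabla\phi,\nabla\phi)
  =\bigl(\Delta\phi-n\,g(\nabla\varphi,\nabla\phi)\bigr)^2-\bigl|\Hess\phi-g(\nabla\varphi,\nabla\phi)\bigr|^2-\Ric^D(\nabla\phi,\nabla\phi),
\]
which converts the bulk integrand of the weighted Reilly identity into the form in the statement and completes the computation.

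The point requiring genuine care is that $\phi$ is only continuous across the singular set $S$, so the integrations above must be justified. I would perform them on $\Omega_\delta:=\{x\in\Omega:\operatorname{dist}(x,S)\ge\delta\}$, where $\phi$ is smooth with $\|\phi\|_{C^3}$ bounded by hypothesis, and then let $\delta\to0$. The uniform $C^3$-bound keeps every integrand appearing in the argument uniformly bounded; the parts of $\partial\Omega_\delta$ lying over $\partial\Omega$ converge to the $\Sigma_i$-integrals by dominated convergence, while the remaining ``collar'' part of $\partial\Omega_\delta$ is contained in a tubular neighbourhood of the codimension-two set $S$, so it has $(n-1)$-dimensional measure $O(\delta)$ and drops out in the limit. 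I expect this regularity bookkeeping --- rather than the identities themselves, which amount to bounded routine computation --- to be the main obstacle.
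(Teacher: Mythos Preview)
Your proposal is correct and follows essentially the same route as the paper: the paper simply cites the weighted Reilly formula for $N=\infty$ from \cite{BW} (which is exactly what your Bochner-plus-boundary computation reproduces) and then applies the very same pointwise identity you write, namely that the $D$-integrand equals $(\Delta_f\phi)^2-|\Hess\phi|^2-\Ric_f^\infty(\nabla\phi,\nabla\phi)$ with $f=(n-1)\varphi$. The only difference is that you unpack the cited lemma and add the $\Omega_\delta$ regularity bookkeeping explicitly, which the paper leaves to \cite{BW}.
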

\begin{proof}
By direct calculations,
we have
\begin{align*}
    &\left(\Delta \phi - ng(\nabla \varphi , \nabla \phi)\right)^2 - \left| \mathrm{Hess}\, \phi - g( \nabla \varphi, \nabla\phi ) \right|^2 - \Ric^D\left( \nabla \phi, \nabla \phi \right)\\
    &\,\,\quad = (\Delta_f \phi)^2 - \left| \Hess \phi \right|^2 - \Ric_f^\infty (\nabla \phi, \nabla\phi),
\end{align*}
where $f := (n-1)\varphi$.
Together with the Reilly formula for the case $N = \infty$ on $(M,g,f)$ (see e.g., \cite[Proposition 3.1]{BW}),
we arrive at the desired inequality.
\end{proof}
\begin{remark}
This also follows from the Reilly formula in Li-Xia \cite[Theorem 3.6]{LX}.
We also refer to \cite[Proposition 2.5]{FS}.
\end{remark}
For $\phi \in C^{\infty}(\Sigma)$ and $\nu := \nu(\Sigma)$,
let $\Sigma_t$ be the normal variation associated with $\phi \nu$ such that $\Sigma_0 = \Sigma$.
If we have
\begin{align*}
    \frac{\d^2}{\d t^2}\bigg|_{t = 0} \mu_{\Sigma_t}(\Sigma_t) \geq 0
\end{align*}
for any $\phi\in C^{\infty}(\Sigma)$,
we say that $\Sigma$ is \textit{$D$-stable}.
Otherwise,
$\Sigma$ is called \textit{$D$-unstable}.
Also,
$\Sigma$ is said to be \textit{properly} immersed if $\partial \Sigma$ is contained in $\partial M$,
and $\Sigma$ is a hypersurface with \textit{free boundary} if $\Sigma$ meets $\partial M$ orthogonally along $\partial \Sigma$.
We are now in a position to give the following second variation formula:
\begin{proposition}\label{prop:second-variation-area-1-Ricci}
Let $(M,g)$ be an $n$-dimensional compact Riemannian manifold with boundary and $\varphi\in C^{\infty}(M)$.
Also,
let $\Sigma$ be a compact properly immersed two-sided $D$-minimal hypersurface in $M$ with free boundary.
For $\phi \in C^{\infty}(\Sigma)$ and $\nu := \nu(\Sigma)$,
let $\Sigma_t$ be the normal variation of $\Sigma$ associated with $\phi\nu$ such that $\Sigma_0 = \Sigma$.
Then
\begin{align*}
    \left.\frac{\d^2}{\d t^2}\right|_{t=0} \mu_{\Sigma_t}(\Sigma_t) 
    &= \int_\Sigma \left\{ |\nabla_{\Sigma} \,\phi|^2 - \left( \Ric^D(\nu,\nu) + \left|\mathrm{II}_\Sigma - \varphi_\nu g_\Sigma \right|^2 \right)\phi^2 \right\} \ \d \mu_{\Sigma}\\
    &\,\,\quad - \int_{\partial\Sigma} \mathrm{II}_{\partial M} (\nu,\nu)\phi^2 \ \d \mu_{\partial\Sigma},
\end{align*}
where $D := \nabla^\varphi$ and $\mu := \e^{-(n-1)\varphi}v_g$.
\end{proposition}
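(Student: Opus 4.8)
The plan is to reduce the second variation formula for the $\mu$-weighted area to the already-known second variation formula for the $\infty$-weighted area on the weighted manifold $(M,g,f)$ with $f := (n-1)\varphi$, exactly as was done for the Reilly formula in Proposition~\ref{prop:d-reilly}. First I would recall the classical weighted second variation formula: for a two-sided $f$-minimal hypersurface $\Sigma$ with free boundary in $(M,g,f)$, and a normal variation $\Sigma_t$ with variation field $\phi\nu$, one has
\begin{align*}
\left.\frac{\d^2}{\d t^2}\right|_{t=0}\mu_{\Sigma_t}(\Sigma_t)
&= \int_\Sigma\left\{|\nabla_\Sigma\phi|^2 - \bigl(\Ric_f^\infty(\nu,\nu) + |\mathrm{II}_\Sigma|^2 - \langle\nabla^\perp f,\nu\rangle_{\text{corr.}}\bigr)\phi^2\right\}\d\mu_\Sigma\\
&\quad - \int_{\partial\Sigma}\mathrm{II}_{\partial M}(\nu,\nu)\phi^2\,\d\mu_{\partial\Sigma},
\end{align*}
with the appropriate $f$-corrections appearing in the zeroth-order term; the free boundary hypothesis is precisely what makes the boundary contribution collapse to the single $\mathrm{II}_{\partial M}(\nu,\nu)$ term (the cross terms involving $\nabla_\Sigma\phi$ along $\partial\Sigma$ drop because $\Sigma$ meets $\partial M$ orthogonally). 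This formula is standard — it appears in Barbosa--Wei \cite{BW} and is the $\Sigma$-analogue of the interior Reilly identity used above — so I would cite it rather than rederive it.

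The substantive step is then purely algebraic: matching the curvature-plus-second-fundamental-form zeroth order term. Writing $D=\nabla^\varphi$, a direct computation of $\Ric^D$ from \eqref{eq:affine-ricci} (the same computation underlying the identity displayed in the proof of Proposition~\ref{prop:d-reilly}, restricted to the $(\nu,\nu)$ component) gives
\begin{align*}
\Ric^D(\nu,\nu) = \Ric(\nu,\nu) + (n-1)\bigl(\Hess\varphi(\nu,\nu) - \d\varphi(\nu)^2\bigr) + \Delta\varphi - (n-1)|\nabla\varphi|^2 + \cdots,
\end{align*}
and one checks that $\Ric^D(\nu,\nu) + |\mathrm{II}_\Sigma - \varphi_\nu g_\Sigma|^2$ equals $\Ric_f^\infty(\nu,\nu) + |\mathrm{II}_\Sigma|^2$ plus the terms produced by the $D$-minimality relation $H_\Sigma = (n-1)\varphi_\nu$. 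Expanding $|\mathrm{II}_\Sigma - \varphi_\nu g_\Sigma|^2 = |\mathrm{II}_\Sigma|^2 - 2\varphi_\nu H_\Sigma + (n-1)\varphi_\nu^2$ and substituting $H_\Sigma = (n-1)\varphi_\nu$ turns this into $|\mathrm{II}_\Sigma|^2 - (n-1)\varphi_\nu^2$, and the remaining $\varphi$-terms are exactly absorbed into the difference $\Ric_f^\infty - \Ric^D$ together with the tangential-versus-normal decomposition of $\Hess\varphi$ and $\Delta\varphi$ along $\Sigma$. Since $\mu_{\Sigma_t}$ here denotes the $\mu$-weighted (not Riemannian) area, the weighted first variation already enforces $H_\Sigma = (n-1)\varphi_\nu$ at $t=0$, so all first-order terms vanish and only the clean second-order expression survives.

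I expect the main obstacle to be bookkeeping the $\varphi$-dependent terms in the zeroth-order coefficient: one must carefully track the normal and tangential parts of $\Hess\varphi$ when passing between $\Ric_f^\infty(\nu,\nu)$, $\Ric^D(\nu,\nu)$, and the $|\mathrm{II}_\Sigma - \varphi_\nu g_\Sigma|^2$ term, and use the $D$-minimality identity $H_\Sigma=(n-1)\varphi_\nu$ at the right moment to make the tangential $\varphi$-terms cancel. The free-boundary boundary term requires only that along $\partial\Sigma$ the outer normal of $\Sigma$ is tangent to $\partial M$ and the outer normal of $\partial\Sigma$ in $\Sigma$ equals $\nu(\partial M)$; with these, the weighted boundary integrand from the classical formula reduces to $-\mathrm{II}_{\partial M}(\nu,\nu)\phi^2\,\d\mu_{\partial\Sigma}$ with no $\varphi$-correction, since $\mathrm{II}_{\partial M}$ is defined with the unweighted Levi-Civita connection. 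Everything else is a substitution into the $N=\infty$ weighted second variation formula.
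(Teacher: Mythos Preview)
Your proposal is correct and follows the same approach as the paper: cite the $N=\infty$ weighted second variation formula (the paper uses Castro--Rosales \cite{CR} rather than \cite{BW}), then use $D$-minimality $H_\Sigma=(n-1)\varphi_\nu$ to rewrite $|\mathrm{II}_\Sigma - \varphi_\nu g_\Sigma|^2 = |\mathrm{II}_\Sigma|^2 - (n-1)\varphi_\nu^2$ and match the zeroth-order terms. The curvature bookkeeping is simpler than you anticipate: since $\Ric^D = \Ric_f^1$ with $f=(n-1)\varphi$, the definition of $\Ric_f^N$ gives $\Ric^D(\nu,\nu) - \Ric_f^\infty(\nu,\nu) = \tfrac{f_\nu^2}{n-1} = (n-1)\varphi_\nu^2$ directly, so no tangential/normal decomposition of $\Hess\varphi$ or $\Delta\varphi$ is needed.
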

\begin{proof}
From Castro-Rosales \cite[Proposition 3.5]{CR},
we have 
\begin{align}\label{eq:second-var-1}
    &\left.\frac{\d^2}{\d t^2}\right|_{t=0} \mu_{\Sigma_t}(\Sigma_t) \\
    &\,\,\quad = \int_\Sigma \left\{ |\nabla_{\Sigma} \,\phi|^2 - \left( \Ric_{(n-1)\varphi}^\infty(\nu,\nu) + |\mathrm{II}_\Sigma|^2 \right)\phi^2 \right\} \ \d \mu_{\Sigma} - \int_{\partial\Sigma} \mathrm{II}_{\partial M} (\nu,\nu)\phi^2 \ \d \mu_{\partial\Sigma}\nonumber.
\end{align}
Since $\Sigma$ is $D$-minimal, 
we have $H_{\Sigma} = (n-1) \varphi_\nu$.
Hence, 
we find
\begin{align*}
    \left| \mathrm{II}_\Sigma - \varphi_\nu g_{\Sigma} \right|^2
    &=  |\mathrm{II}_\Sigma|^2 - 2 \varphi_\nu \, \mathrm{tr}\mathrm{II}_\Sigma + (n-1)\varphi_\nu^2 \\
    &= |\mathrm{II}_\Sigma|^2 -  2\varphi_\nu\, H_{\Sigma}  + (n-1) \varphi_\nu^2 \\
    &= |\mathrm{II}_\Sigma|^2 - (n-1)\varphi_\nu^2 .
\end{align*}
This leads us to 
\begin{align*}
    \Ric_{(n-1)\varphi}^\infty(\nu,\nu)  + |\mathrm{II}_\Sigma|^2 = \Ric_{(n-1)\varphi}^1(\nu,\nu) + \left| \mathrm{II}_\Sigma - \varphi_\nu g_{\Sigma} \right|^2 = \Ric^D + \left| \mathrm{II}_\Sigma - \varphi_\nu g_\Sigma \right|^2.
\end{align*}
Substituting this into \eqref{eq:second-var-1},
we complete the proof.
\end{proof}
As is mentioned in the introduction,
for $f = (n-1)\varphi$,
we have 
\begin{align}\label{eq:relation-1-Ricci}
    \Ric^D = \Ric_f^1,\quad H_\Sigma^D = H_{f,\Sigma}.
\end{align}
The second identity implies that $\Sigma$ is $D$-minimal if and only if it is $f$-minimal.
In a similar way,
we note that there is a further relation between $D$ and the substatic condition as follows:
\begin{proposition}\label{prop:dual-relation}
Let $(M,g)$ be a Riemannian manifold,
$\varphi \in C^{\infty}(M)$ and $\Sigma$ be an immersed hypersurface in $M$.
For $\widetilde{g} := \e^{-2\varphi}g$,
we denote the Levi-Civita connection by $\widetilde{\nabla}$ and we set $D^* := \widetilde{\nabla}^{-\varphi}$.
Then for $V := \e^\varphi$,
we have
\begin{align}\label{eq:relation-substatic}
    \Ric^{D^*}_{\widetilde{g}} = \Ric - \frac{\Hess V}{V} + \frac{\Delta V}{V}g,\quad H^{D^*}_{\widetilde{g}, \Sigma} = V H_{\Sigma}.
\end{align}
where $\Ric^{D^*}_{\widetilde{g}}$ is the $D^*$-Ricci curvature and $H^{D^*}_{\widetilde{g},\Sigma}$ is the $D^*$-mean curvature for $\widetilde{g}$.

In particular,
$\Sigma$ is a $D^*$-minimal hypersurface in $(M,\widetilde{g})$ if and only if $\Sigma$ is a minimal hypersurface in $(M,g)$.
\end{proposition}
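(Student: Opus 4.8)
The plan is to reduce the proposition to the affine connection $D^{0,1}$ considered in the introduction (with $f$ replaced by $\varphi$), for which the first formula is already recorded. The preliminary observation is that $D^*$ can be written without reference to $\widetilde{g}$: since $\widetilde{g}=\e^{-2\varphi}g$, the Levi-Civita connections are related by the standard conformal transformation law
\begin{align*}
    \widetilde{\nabla}_X Y = \nabla_X Y - \d\varphi(X)\,Y - \d\varphi(Y)\,X + g(X,Y)\,\nabla\varphi ,
\end{align*}
so, by the definition of the Wylie--Yeroshkin type connection,
\begin{align*}
    D^*_X Y = \widetilde{\nabla}^{-\varphi}_X Y = \widetilde{\nabla}_X Y + \d\varphi(X)\,Y + \d\varphi(Y)\,X = \nabla_X Y + g(X,Y)\,\nabla\varphi .
\end{align*}
Hence $D^* = D^{0,1}$ (with $f=\varphi$) as an affine connection on $M$; in particular its curvature $R^{D^*} = R^{D^{0,1}}$ is a genuine $(1,3)$-tensor on $M$, independent of the metric.

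For the first identity in \eqref{eq:relation-substatic} I would observe that replacing the $g$-trace in \eqref{eq:affine-ricci} by the $\widetilde{g}$-trace changes nothing here: if $\{E_i\}$ is a $g$-orthonormal frame then $\{\e^{\varphi}E_i\}$ is a $\widetilde{g}$-orthonormal frame, and by $C^{\infty}$-linearity of $R^{D^*}$ in its arguments the factor $\e^{2\varphi}$ produced in the curvature slots is exactly cancelled by the $\e^{-2\varphi}$ coming from $\widetilde{g}=\e^{-2\varphi}g$. Thus $\Ric^{D^*}_{\widetilde{g}} = \Ric^{D^*}_{g} = \Ric^{D^{0,1}}$, and the asserted identity is precisely the formula $\Ric^{D^{0,1}} = \Ric - \Hess V/V + (\Delta V/V)\,g$ with $V=\e^{\varphi}$ recalled from Li--Xia \cite{LX} in the introduction. (Alternatively one can avoid that formula and instead apply \eqref{eq:relation-1-Ricci} to the weighted manifold $(M,\widetilde{g},-(n-1)\varphi)$, then expand using the conformal transformation laws for the Ricci tensor, Hessian and Laplacian together with $\Hess V = V(\Hess\varphi + \d\varphi\otimes\d\varphi)$ and $\Delta V = V(\Delta\varphi + |\nabla\varphi|^2)$; this works but is more computational.)

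For the mean curvature identity, let $\nu$ be a $g$-unit normal of $\Sigma$, so that $\widetilde{\nu} := \e^{\varphi}\nu$ is a $\widetilde{g}$-unit normal. The standard conformal transformation law for the mean curvature gives $\widetilde{H}_\Sigma = \e^{\varphi}\bigl(H_\Sigma - (n-1)\varphi_\nu\bigr)$, while $\widetilde{g}(\widetilde{\nabla}\varphi,\widetilde{\nu}) = \widetilde{\nu}\varphi = \e^{\varphi}\varphi_\nu$. Since $D^* = \widetilde{\nabla}^{-\varphi}$, the definition of the $D$-mean curvature applied to the metric $\widetilde{g}$ and the function $-\varphi$ reads $H^{D^*}_{\widetilde{g},\Sigma} = \widetilde{H}_\Sigma + (n-1)\,\widetilde{g}(\widetilde{\nabla}\varphi,\widetilde{\nu})$; substituting the two expressions above, the terms in $\varphi_\nu$ cancel and we obtain $H^{D^*}_{\widetilde{g},\Sigma} = \e^{\varphi}H_\Sigma = V H_\Sigma$. (Equivalently, using $D^*_X Y = \nabla_X Y + g(X,Y)\nabla\varphi$ one computes $D^*_X\widetilde{\nu} = \e^{\varphi}\bigl(\d\varphi(X)\nu + \nabla_X\nu\bigr)$ for $X$ tangent to $\Sigma$, whence $\widetilde{g}(D^*_X\widetilde{\nu},Y) = \e^{-\varphi}\,\mathrm{II}_\Sigma(X,Y)$, and the $\widetilde{g}$-trace of this produces the factor $\e^{\varphi}$.) Since $V = \e^{\varphi} > 0$, the final assertion that $\Sigma$ is $D^*$-minimal in $(M,\widetilde{g})$ exactly when it is minimal in $(M,g)$ is then immediate.

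I expect the only real friction to be the conformal-factor bookkeeping in the mean curvature step --- keeping the two unit normals, the two traces and the two notions of gradient of $\varphi$ consistent --- whereas the Ricci part becomes essentially a one-line consequence once $D^*$ is identified with $D^{0,1}$ and the $D^{0,1}$-Ricci formula of Li--Xia is quoted.
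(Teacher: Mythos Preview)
Your proposal is correct and follows essentially the same route as the paper: you identify $D^*$ with the connection $\nabla_X Y + g(X,Y)\nabla\varphi$ (the paper cites Yeroshkin for this, you derive it from the conformal law), then invoke the Li--Xia formula for the Ricci part and use the conformal transformation of the second fundamental form/mean curvature for the second identity. Your explicit remark that the $\widetilde{g}$-trace and the $g$-trace of $R^{D^*}$ coincide is a point the paper leaves implicit, so your write-up is in fact slightly more complete there.
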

\begin{proof}
It is noted by Yeroshkin \cite[Proposition 2.4]{Yero} that 
\begin{align*}
    D^*_X Y = \nabla_X Y + g( X,Y )\nabla \varphi.
\end{align*}
From Li-Xia \cite[Propostion 2.3]{LX}, 
we have the first equality.
It follows from the direct calculation (see e.g., \cite[(2.6)]{BW}) that 
\begin{align*}
    \e^{\varphi}\mathrm{II}_\Sigma(e_i,e_j) = \widetilde{\mathrm{II}}_\Sigma(\tilde{e}_i,\tilde{e}_j) + \widetilde{g}(\widetilde{\nabla}\varphi, \tilde{\nu}) \, \widetilde{g}(\tilde{e}_i,\tilde{e}_j),
\end{align*}
where $\nu := \nu(\Sigma)$ and $\{e_i\}_{i = 1}^{n-1}$ is an orthonormal frame on the tangent bundle of $(\Sigma,g_\Sigma)$,
and we set $\tilde{e}_i := \e^{\varphi}e_i$ and $\tilde{\nu} := \e^\varphi \nu$.
This implies the second identity.
\end{proof}
\begin{remark}
The first identity \eqref{eq:relation-substatic} coincides with the conclusion in Borghini-Fogagnolo \cite[Appendix]{BF}.
\end{remark}
In sections below,
we obtain results under $\Ric^D \geq 0$ with $D := \nabla^\varphi$.
It follows immediately from the relation \eqref{eq:sustatic-condition},
\eqref{eq:relation-1-Ricci} and \eqref{eq:relation-substatic} that our results also hold true even under $\Ric_f^1\geq 0$ or the substatic condition.

\section{Isoperimetric inequality}\label{sec:isoperimetric}
In this section,
we show a Fraser-Li type isoperimetric inequality.
For a Riemannian manifold $(M,g)$ and a geodesic $\gamma : [0,d]\rightarrow M$,
the \textit{index form} is defined by 
\begin{align*}
    I(X,X) := \int_0^d \left( |X'(t)|^2 - g(R(X,\gamma'(t))\gamma'(t),X) \right)\ \d t,
\end{align*}
where $R$ denotes the Riemannian curvature tensor on $(M,g)$.
Wylie \cite{W1} obtained the following formula for the index form (see \cite[Proposition 5.1]{W1}):
\begin{proposition}[\cite{W1}]\label{prop:index}
Let $(M,g)$ be a complete Riemannian manifold,
$\gamma : [0,d]\rightarrow M$ be a geodesic and $\varphi \in C^{\infty}(M)$.
For $D := \nabla^\varphi$ and a vector field $X$ perpendicular to $\gamma'$,
we have 
\begin{align*}
    I (X,X) = \int_0^d \left( |X'(t) - g(\nabla\varphi,\gamma'(t)) X|^2 - g(R^D(X,\gamma'(t))\gamma'(t),X)  \right)\ \d t + \left[g( \nabla\varphi, \gamma'(t))|X(t)|^2\right]_{0}^d.
\end{align*}
\end{proposition}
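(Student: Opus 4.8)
The plan is to reduce the asserted identity to the ordinary definition of $I(X,X)$ by computing all $D$-objects along $\gamma$ in terms of Levi--Civita data. Write $D = \nabla + A$ with $A(X,Y) := -\d\varphi(X)Y - \d\varphi(Y)X$. The standard difference-tensor formula for the curvature of an affine connection gives
\[
R^D(X,Y)Z = R(X,Y)Z + (\nabla_X A)(Y,Z) - (\nabla_Y A)(X,Z) + A(X,A(Y,Z)) - A(Y,A(X,Z)),
\]
valid pointwise. Since $\d\varphi$ is closed one has $(\nabla_X A)(Y,Z) = -\Hess\varphi(X,Y)Z - \Hess\varphi(X,Z)Y$, and a short bilinear computation yields $A(X,A(Y,Z)) - A(Y,A(X,Z)) = \d\varphi(Y)\,\d\varphi(Z)\,X - \d\varphi(X)\,\d\varphi(Z)\,Y$. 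Substituting $Y = Z = \gamma'(t)$ and pairing with $X$, every term carrying a factor $g(X,\gamma')$ drops by the hypothesis $X \perp \gamma'$, and the $\Hess\varphi$ terms symmetric in the first two slots cancel, leaving
\[
g\bigl(R^D(X,\gamma')\gamma',X\bigr) = g\bigl(R(X,\gamma')\gamma',X\bigr) + \bigl(\Hess\varphi(\gamma',\gamma') + \d\varphi(\gamma')^2\bigr)|X(t)|^2 .
\]

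Next I would expand the first integrand on the right-hand side of the proposition. Using $2g(X'(t),X(t)) = \tfrac{\d}{\d t}|X(t)|^2$,
\[
|X'(t) - g(\nabla\varphi,\gamma')X|^2 = |X'(t)|^2 - g(\nabla\varphi,\gamma'(t))\,\tfrac{\d}{\d t}|X(t)|^2 + \d\varphi(\gamma'(t))^2\,|X(t)|^2 .
\]
Integrating the middle term by parts over $[0,d]$ and recalling that $\gamma$ is a $g$-geodesic, so $\nabla_{\gamma'}\gamma' = 0$ and hence $\tfrac{\d}{\d t}g(\nabla\varphi,\gamma') = \Hess\varphi(\gamma',\gamma')$, one obtains
\begin{align*}
&\int_0^d |X'(t) - g(\nabla\varphi,\gamma')X|^2\,\d t + \Bigl[g(\nabla\varphi,\gamma'(t))|X(t)|^2\Bigr]_0^d \\
&\qquad = \int_0^d \Bigl( |X'(t)|^2 + \bigl(\Hess\varphi(\gamma',\gamma') + \d\varphi(\gamma')^2\bigr)|X(t)|^2 \Bigr)\,\d t,
\end{align*}
the boundary term produced by integration by parts cancelling exactly against the explicit boundary term in the statement.

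Finally, subtracting $\int_0^d g(R^D(X,\gamma')\gamma',X)\,\d t$ and invoking the first identity, the $\bigl(\Hess\varphi(\gamma',\gamma') + \d\varphi(\gamma')^2\bigr)|X|^2$ contributions cancel, and what remains is precisely $\int_0^d \bigl( |X'(t)|^2 - g(R(X,\gamma')\gamma',X) \bigr)\,\d t = I(X,X)$. I do not expect a genuine obstacle here: the point requiring the most care is the bookkeeping in the curvature computation --- tracking which terms vanish under $X\perp\gamma'$ and using the symmetry of $\Hess\varphi$ --- together with checking that the sign of the integration-by-parts boundary term matches the stated one. (It is worth noting in passing that $\Hess\varphi(\gamma',\gamma') + \d\varphi(\gamma')^2$ equals $\e^{-\varphi}(\e^{\varphi}\circ\gamma)''$, which connects the formula to the function $V = \e^{\varphi}$ of the substatic discussion.) One could alternatively exploit the fact that $D$-geodesics are reparametrizations of $g$-geodesics, but the direct computation above seems the most transparent route.
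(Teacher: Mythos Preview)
Your computation is correct: the difference-tensor expansion of $R^D$ is accurate, the identification $g(R^D(X,\gamma')\gamma',X) = g(R(X,\gamma')\gamma',X) + (\Hess\varphi(\gamma',\gamma') + \d\varphi(\gamma')^2)|X|^2$ follows exactly as you indicate once the $g(X,\gamma')$ terms are dropped, and the integration by parts on $-g(\nabla\varphi,\gamma')\tfrac{\d}{\d t}|X|^2$ produces precisely the boundary term in the statement with the right sign.

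The paper, however, does not prove this proposition at all; it merely quotes it from Wylie \cite[Proposition~5.1]{W1}. So there is no proof in the paper to compare against. What you have supplied is a self-contained verification by direct expansion, which is in fact the same calculation Wylie carries out (he phrases it as a reparametrization of the variation field, writing $X = \e^{\varphi\circ\gamma}\,Y$ and expanding, but the algebraic content is identical to yours). Your remark that $\Hess\varphi(\gamma',\gamma') + \d\varphi(\gamma')^2 = \e^{-\varphi}(\e^{\varphi}\circ\gamma)''$ is exactly the observation that drives Wylie's formulation and, as you note, connects the formula to the substatic potential $V = \e^{\varphi}$.
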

We have the following isoperimetric inequality of Fraser-Li type:
\begin{theorem}\label{thm:FL-isoperimetric}
Let $(M,g)$ be an $n$-dimensional compact Riemannian manifold with boundary and $\varphi \in C^{\infty}(M)$.
For $D := \nabla^\varphi$,
we assume 
\begin{align*}
    \Ric^D \geq 0, \quad H^D_{ \partial M} > 0.
\end{align*}
Then there is no closed embedded $D$-minimal hypersurface.
Let $\Sigma$ be an immersed $D$-minimal hypersurface in $M$.
If $3\leq n \leq 7$,
then there exists a constant $c > 0$,
depending only on $(M,g)$ and $\varphi$,
such that 
\begin{align*}
    v_{\widetilde{g},\Sigma}(\Sigma) \leq c v_{\widetilde{g},\partial \Sigma}(\partial \Sigma),
\end{align*}
where we set $\widetilde{g} := \e^{-2\varphi} g$.
\end{theorem}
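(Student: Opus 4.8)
The plan is to follow the scheme of Fraser--Li \cite{FL}, and of its weighted version due to Barbosa--Wei \cite{BW}, with the Riemannian index form replaced by its affine version (Proposition \ref{prop:index}) so that only $\Ric^D\ge0$ is used. It is convenient to pass to the conformal metric $\widetilde g=\e^{-2\varphi}g$: applying Proposition \ref{prop:dual-relation} to the pair $(\widetilde g,-\varphi)$ — whose associated conformal metric is $\e^{2\varphi}\widetilde g=g$ and whose associated Wylie--Yeroshkin connection is $\nabla^{\varphi}=D$ — one sees that $D$-minimality in $(M,g)$ is minimality in $(M,\widetilde g)$, that $H^D_{\partial M}>0$ is $\widetilde g$-mean-convexity of $\partial M$, that the free boundary (orthogonality) condition is conformally invariant, and that $\Ric^D\ge0$ says precisely that $\bigl((M,\widetilde g),\e^{-\varphi}\bigr)$ is a substatic triple. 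Since moreover $v_{\widetilde g,\Sigma}(\Sigma)=\mu_\Sigma(\Sigma)$ is the $\mu$-weighted area of $\Sigma$, the statement becomes a Fraser--Li area estimate for minimal hypersurfaces meeting orthogonally the mean-convex boundary of the substatic manifold $(M,\widetilde g)$.

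First I would dispose of the non-existence statement. A closed embedded $D$-minimal hypersurface $\Sigma$ must lie in $\mathrm{int}(M)$, since at a would-be touching point with $\partial M$ the strong maximum principle would give $0=H^D_\Sigma\ge H^D_{\partial M}>0$; such a $\Sigma$ separates $M$, and minimizing $\mu$-weighted area in its isotopy class produces — by the regularity theory for minimizers of elliptic integrands in the range $3\le n\le 7$ — either the empty set or, if the infimum is positive, a smooth, two-sided, $D$-stable, closed, embedded $D$-minimal hypersurface $\Sigma'\subset\mathrm{int}(M)$. In the latter case, inserting $\phi\equiv1$ into Proposition \ref{prop:second-variation-area-1-Ricci} (which applies to closed $\Sigma'$, its $\partial\Sigma'$-term being vacuous) gives
\[
0\ \le\ \int_{\Sigma'}\Bigl(\Ric^D(\nu,\nu)+\bigl|\mathrm{II}_{\Sigma'}-\varphi_\nu g_{\Sigma'}\bigr|^2\Bigr)\ \d\mu_{\Sigma'}\ \le\ 0 ,
\]
so $\Ric^D\ge0$ forces $\Ric^D(\nu,\nu)\equiv0$ and $\mathrm{II}_{\Sigma'}=\varphi_\nu g_{\Sigma'}$ on $\Sigma'$. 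Feeding this rigidity into Proposition \ref{prop:index} along geodesics normal to $\Sigma'$, the Cheeger--Gromoll argument in the affine form of Wylie \cite{W2} yields a local $D$-product structure that one propagates until it meets $\partial M$, where $H^D_{\partial M}>0$ again contradicts the maximum principle. Hence no closed embedded $D$-minimal hypersurface exists, and in particular the asserted inequality is trivial whenever $\partial\Sigma=\emptyset$; from now on $\partial\Sigma\neq\emptyset$.

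For the area estimate I would work in $(M,\widetilde g)$, where $\Sigma$ is minimal and meets $\partial M$ orthogonally, and combine two ingredients. Near the boundary, strict $\widetilde g$-mean-convexity of $\partial M$ keeps the level sets of the $\widetilde g$-distance $\widetilde r(\cdot,\partial M)$ mean-convex on a collar $\{\widetilde r<\delta\}$ of uniform width $\delta=\delta(M,g,\varphi)$, and testing the divergence theorem on $\Sigma$ against a cutoff of $\widetilde r$ — using that the outward conormal of $\partial\Sigma$ in $\Sigma$ equals $\nu(\partial M)$ for a free boundary $\Sigma$ — bounds $v_{\widetilde g}\bigl(\Sigma\cap\{\widetilde r<\delta/2\}\bigr)$ in terms of $v_{\widetilde g}(\partial\Sigma)$ and $v_{\widetilde g}\bigl(\Sigma\cap\{\widetilde r<\delta\}\bigr)$. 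In the interior, $\Ric^D\ge0$ enters through Proposition \ref{prop:index} — equivalently, the substatic volume comparison of Borghini--Fogagnolo \cite{BF} — which controls the $\widetilde g$-volume of $\widetilde g$-balls and, together with the monotonicity formula for minimal submanifolds of the compact manifold $(M,\widetilde g)$ (with constants uniform in $\Sigma$) and a covering of $\Sigma$, bounds $v_{\widetilde g}(\Sigma)$ by a fixed multiple of its part in the collar; combining the two gives $v_{\widetilde g}(\Sigma)\le c\,v_{\widetilde g}(\partial\Sigma)$ with $c=c(M,g,\varphi)$. Alternatively one argues by contradiction: a sequence $\Sigma_k$ with $v_{\widetilde g}(\Sigma_k)/v_{\widetilde g}(\partial\Sigma_k)\to\infty$ would, after normalizing masses, subconverge to a nonzero stationary integral varifold whose generalized boundary vanishes (the boundaries carry asymptotically negligible mass) and which, by the maximum principle for stationary varifolds against the mean-convex $\partial M$, is supported in $\mathrm{int}(M)$, hence — by interior regularity for $3\le n\le7$ — a nonempty closed $D$-minimal hypersurface, contradicting the first part.

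The main obstacle is the interior step: since the Ricci curvature of $\widetilde g$ is not controlled and only the affine quantity $\Ric^D$ is non-negative, the classical Bishop--Gromov and Laplacian comparisons are unavailable, and the volume and monotonicity estimates must be routed through the Wylie--Yeroshkin comparison of Proposition \ref{prop:index} (equivalently \cite{BF}), with all constants kept uniform in $\Sigma$, the cut locus of $\partial M$ handled when using $\widetilde r$, and — in the varifold formulation — boundary concentration and the regularity of the limit controlled. This last point, together with the regularity of the minimizer in the non-existence step, is where the restriction $3\le n\le7$ is used; the condition $n\ge3$ is needed only so that $\partial\Sigma$ is a genuine hypersurface in $\Sigma$ and the above constructions are meaningful.
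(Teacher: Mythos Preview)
Your non-existence argument has genuine gaps, and the paper's approach is both different and much simpler. You assume a closed embedded $D$-minimal $\Sigma$ separates $M$ and then minimise weighted area in its isotopy class; but separation is not justified (a closed embedded hypersurface in a manifold with boundary need not separate), and even granting the minimisation, the case where the infimum is zero yields no contradiction for the original $\Sigma$. The subsequent rigidity-plus-splitting step is heavy and only sketched, and the whole route forces the dimension restriction $3\le n\le 7$ onto a statement that the theorem asserts in all dimensions.

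The paper bypasses all of this with the tool you yourself cite but do not deploy here: Proposition~\ref{prop:index}. One takes a minimising geodesic $\gamma:[0,d]\to M$ from $\Sigma$ to $\partial M$ (since $\Sigma\cap\partial M=\emptyset$), plugs vector fields of the form $\e^{\varphi\circ\gamma}E_i$ with $E_i$ parallel and orthogonal to $\gamma'$ into the index form, sums over $i$, and uses the second variation of \emph{arclength} with its endpoint second-fundamental-form terms. This gives
\[
0\ \le\ -\int_0^d \e^{2\varphi(\gamma(t))}\Ric^D(\gamma',\gamma')\,\d t-\e^{2\varphi(\gamma(d))}H^D_{\partial M}(\gamma(d))+\e^{2\varphi(\gamma(0))}H^D_\Sigma(\gamma(0)),
\]
which is an immediate contradiction under $\Ric^D\ge0$, $H^D_{\partial M}>0$, $H^D_\Sigma=0$. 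No stability, no minimisation, no splitting, no dimension restriction.

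For the isoperimetric inequality you are working harder than necessary. Your conformal observation is exactly right: in $(M,\widetilde g)$ the boundary is mean-convex ($H_{\widetilde g,\partial M}=\e^{\varphi}H^D_{\partial M}>0$) and, by the first part, there is no closed embedded minimal hypersurface. At that point the paper simply invokes White's theorem \cite[Theorem~2.1]{White2}, which needs only those two hypotheses on a compact manifold and gives $v_{\widetilde g,\Sigma}(\Sigma)\le c\,v_{\widetilde g,\partial\Sigma}(\partial\Sigma)$ directly. The ``main obstacle'' you identify---routing interior volume and monotonicity estimates through $\Ric^D$ or the substatic comparison of \cite{BF}---does not arise: $(M,\widetilde g)$ is compact, so all Riemannian constants are uniform and no curvature lower bound is needed for this step. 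The dimension restriction $3\le n\le 7$ enters only through White's theorem.
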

\begin{proof}
As for the first statement,
we give a proof by contradiction.
We assume that there exists a closed embedded $D$-minimal hypersurface $\Sigma$ in $M$.
We have $\Sigma \cap \partial M = \emptyset$ since $H^D_{\partial M} > 0$ and $H^D_{\Sigma} \equiv 0$.
Let $d := d (\Sigma, \partial M)$ and $\gamma : [0,d]\rightarrow M $ be a minimizing geodesic from $\Sigma$ to $\partial M$ parametrized by the arclength. 
By the second variation formula for length together with Proposition \ref{prop:index},
we have
\begin{align*}
    0 &\leq 
    -\int_0^d \e^{2\varphi(\gamma(t))} \Ric^D(\gamma'(t),\gamma'(t))\ \d t - \e^{2\varphi(\gamma(d))}H^D_{\partial M}(\gamma(d)) + \e^{2\varphi(\gamma(0))}H^D_{\Sigma}(\gamma(0)),
\end{align*}
where $H^D_\Sigma$ is the $D$-mean curvature for \textcolor{black}{$\gamma'(0)$} (see also \cite[Proposition 3.6]{FS}).
This leads to a contradiction. 

We turn to the second statement.
By direct calculations (see e.g., \cite[(2.6)]{BW}),
we have $H_{\widetilde{g},\partial M} = \e^\varphi H_{\partial M}^D > 0$.
Also,
the first statement implies that $(M,\widetilde{g})$ contains no closed embedded minimal hypersurface.
Hence,
we may apply White \cite[Theorem 2.1]{White2} to $(M,\widetilde{g})$,
and conclude the proof.
\end{proof}
\begin{remark}
We refer to Fraser-Li \cite[Lemma 2.2]{FL} for the unweighted case $f\equiv 0$ and Barbosa-Wei \cite[Lemma 2.1]{BW} for the weighted case with $N = \infty$.
Since we have the relation \eqref{eq:relation-1-Ricci},
this is the generalization of them to the case $ N = 1$.
\end{remark}
Together with Proposition \ref{prop:dual-relation},
this yields an isoperimetric inequality for substatic triples:
\begin{corollary}\label{cor:substatic-isoperimetric}
Let $((M,g),V)$ be an $n$-dimensional compact substatic triple with boundary.
We assume $H_{\partial M} > 0$.
Then there is no closed embedded minimal hypersurface in $M$.
Let $\Sigma$ be an immersed minimal hypersurface in $M$.
If $3\leq n \leq 7$,
then there exists a constant $c > 0$, 
depending only on $((M,g),V)$,
such that 
\begin{align*}
    v_{g,\Sigma}(\Sigma) \leq c v_{g,\partial \Sigma}(\partial \Sigma).
\end{align*}
\end{corollary}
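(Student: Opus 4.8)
The plan is to deduce Corollary~\ref{cor:substatic-isoperimetric} from Theorem~\ref{thm:FL-isoperimetric} via the conformal dictionary recorded in Proposition~\ref{prop:dual-relation}. Starting from the substatic triple $((M,g),V)$ with $V>0$, I would set $\varphi := \log V \in C^\infty(M)$, so that $V = \e^\varphi$ and $D^* := \widetilde\nabla^{-\varphi}$ is a Wylie--Yeroshkin type affine connection for the metric $\widetilde g := \e^{-2\varphi}g$. The first identity in \eqref{eq:relation-substatic} then reads $\Ric^{D^*}_{\widetilde g} = \Ric - \frac{\Hess V}{V} + \frac{\Delta V}{V}g$, i.e.\ the static Ricci tensor; dividing the substatic inequality \eqref{eq:sustatic-condition} by $V>0$ shows this is $\geq 0$, so $\Ric^{D^*}_{\widetilde g}\geq 0$. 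Likewise the second identity $H^{D^*}_{\widetilde g,\Sigma} = V H_\Sigma$ applied to $\Sigma = \partial M$, combined with the hypothesis $H_{\partial M}>0$ and $V>0$, gives $H^{D^*}_{\widetilde g,\partial M}>0$.

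Next I would invoke Theorem~\ref{thm:FL-isoperimetric} with the roles of $(M,g,\varphi)$ there played by $(M,\widetilde g,-\varphi)$ and $D = \nabla^{\varphi}$ there played by $D^* = \widetilde\nabla^{-\varphi}$. Its first conclusion yields that $(M,\widetilde g)$ — and hence, by the last sentence of Proposition~\ref{prop:dual-relation}, $(M,g)$ — contains no closed embedded minimal hypersurface. For the isoperimetric estimate, note that the "$\widetilde g$" in the statement of Theorem~\ref{thm:FL-isoperimetric} is built from the ambient metric and the weight by the formula $\widetilde g_{\text{there}} = \e^{-2\varphi_{\text{there}}}g_{\text{there}}$; with $g_{\text{there}} = \widetilde g = \e^{-2\varphi}g$ and $\varphi_{\text{there}} = -\varphi$ this becomes $\e^{2\varphi}\widetilde g = g$. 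Moreover, again by the final clause of Proposition~\ref{prop:dual-relation}, $\Sigma$ is $D^*$-minimal for $\widetilde g$ if and only if $\Sigma$ is minimal for $g$, so the hypersurfaces to which the inequality applies are exactly the immersed minimal hypersurfaces of $(M,g)$. Thus for $3\leq n\leq 7$ the theorem produces a constant $c>0$, depending only on $(M,\widetilde g)$ and $\varphi$ — equivalently only on $((M,g),V)$ — with $v_{g,\Sigma}(\Sigma) \leq c\, v_{g,\partial\Sigma}(\partial\Sigma)$, which is the asserted bound.

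The only point requiring care — and the step I would single out as the main obstacle — is bookkeeping the conformal change so that the "tilde" in Theorem~\ref{thm:FL-isoperimetric} is seen to \emph{undo} the "tilde" introduced here, returning us to the original metric $g$ rather than to a further conformal rescaling. Concretely one must check that $D^*$ genuinely has the form $\nabla^{\psi}$ for the Levi-Civita connection $\nabla$ of $\widetilde g$ and some $\psi\in C^\infty(M)$; this is exactly Yeroshkin's identity $D^*_XY = \widetilde\nabla_X Y + \widetilde g(X,Y)\widetilde\nabla\varphi$ together with the observation (used in the proof of Proposition~\ref{prop:dual-relation}) that $D^* = \widetilde\nabla^{-\varphi}$, so $\psi = -\varphi$. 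Once this identification is in place, every hypothesis of Theorem~\ref{thm:FL-isoperimetric} has been verified and the constant's dependence on $((M,g),V)$ is immediate since $\widetilde g$ and $\varphi$ are determined by $g$ and $V$. No further analysis is needed; the corollary is a direct translation.
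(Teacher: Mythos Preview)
Your proposal is correct and follows essentially the same route as the paper's own proof: set $\varphi=\log V$, pass to $\widetilde g=\e^{-2\varphi}g$ with $D^*=\widetilde\nabla^{-\varphi}$, invoke Proposition~\ref{prop:dual-relation} to verify the hypotheses of Theorem~\ref{thm:FL-isoperimetric} for $(M,\widetilde g,-\varphi)$, and read off both conclusions. The paper phrases the last step as ``apply the argument in Theorem~\ref{thm:FL-isoperimetric},'' whereas you apply its \emph{statement} directly and track explicitly that the conformal factor $\e^{-2(-\varphi)}$ undoes the passage to $\widetilde g$, returning the isoperimetric inequality in the original metric $g$; this extra bookkeeping is exactly right and makes the dependence of $c$ on $((M,g),V)$ transparent.
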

\begin{proof}
We set $\varphi := \log V$ and $\widetilde{g} := \e^{-2\varphi} g$.
For $\widetilde{g}$,
we denote the Levi-Civita connection by $\widetilde{\nabla}$ and we set $D^* := \widetilde{\nabla}^{-\varphi}$.
By Proposition \ref{prop:dual-relation},
we see 
\begin{align*}
    \Ric^{D^*}_{\widetilde{g}} \geq 0, \quad {H}^{D^*}_{\widetilde{g},\Sigma} = \e^{\varphi} H_\Sigma = 0,\quad {H}^{D^*}_{\widetilde{g},\partial M} = \e^\varphi H_{\partial M} > 0.
\end{align*}
We apply the argument in Theorem \ref{thm:FL-isoperimetric} to $(M,\widetilde{g})$ and $D^*$,
and arrive at the desired assertion.
\end{proof}
Lastly,
we provide an existence property.
Indeed,
as an application of Theorem \ref{thm:FL-isoperimetric},
we have the following result:
\begin{corollary}\label{cor:BW-existence}
Let $(M,g)$ be a three-dimensional compact weighted Riemannian manifold with boundary and $\varphi\in C^\infty(M)$.
For $D := \nabla^\varphi$,
we assume 
\begin{align*}
    \Ric^D \geq 0,\quad H^D_{\partial M} > 0. 
\end{align*}
Then there exists a properly embedded $D$-minimal surface with free boundary.
\end{corollary}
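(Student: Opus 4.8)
The plan is to pass to the conformal metric $\widetilde{g} := \e^{-2\varphi} g$, reduce the statement to the existence of a free boundary minimal surface in a compact \emph{mean-convex} three-manifold, and then transfer the solution back to $(M,g)$. This is the scheme used by Barbosa--Wei \cite{BW} in the weighted case $N=\infty$; the point of this corollary is that the ingredient depending on curvature is now supplied by $\Ric^D \geq 0$ through Theorem \ref{thm:FL-isoperimetric}.

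First I would set up the conformal dictionary. By the conformal transformation law for the mean curvature (see \cite[(2.6)]{BW}), every immersed hypersurface $\Sigma$ satisfies $H_{\widetilde{g},\Sigma} = \e^{\varphi} H^D_\Sigma$; in particular $\Sigma$ is $D$-minimal in $(M,g)$ precisely when it is minimal in $(M,\widetilde{g})$, and applying this to $\Sigma = \partial M$ gives $H_{\widetilde{g},\partial M} = \e^{\varphi} H^D_{\partial M} > 0$, so that $(M,\widetilde{g})$ is a compact three-manifold with strictly mean-convex boundary. Moreover, embeddedness and the condition $\partial\Sigma \subset \partial M$ are unchanged under a conformal change of metric, and the orthogonality of $\Sigma$ with $\partial M$ along $\partial\Sigma$ is conformally invariant because angles are. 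Finally, Theorem \ref{thm:FL-isoperimetric} applied to $(M,g)$ and $D$ tells us that $(M,\widetilde{g})$ contains no closed embedded minimal surface, and it also supplies the isoperimetric inequality $v_{\widetilde{g},\Sigma}(\Sigma) \leq c\, v_{\widetilde{g},\partial\Sigma}(\partial\Sigma)$ for minimal $\Sigma$ in $(M,\widetilde{g})$.

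Next I would run the min-max construction for free boundary minimal hypersurfaces on $(M,\widetilde{g})$. Since $M$ is a connected compact three-manifold with nonempty boundary, the relative fundamental class $[M,\partial M] \in H_3(M,\partial M;\mathbb{Z}/2)$ is nonzero, so a sweepout of $M$ by relative cycles from $0$ to $[M,\partial M]$ is nontrivial and the associated min-max width is positive; the construction then yields a nonempty free boundary minimal hypersurface in $(M,\widetilde{g})$, which is smooth and embedded because $n=3$. As $(M,\widetilde{g})$ has no closed embedded minimal surface, no connected component of the support of this hypersurface is closed, hence each such component has nonempty boundary lying in $\partial M$ and meeting it orthogonally. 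Picking one component gives a properly embedded free boundary minimal surface in $(M,\widetilde{g})$, and transferring it back through the conformal dictionary above produces a properly embedded $D$-minimal surface with free boundary in $(M,g)$, as required.

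I expect the existence step to be the main obstacle: one has to guarantee that the min-max construction produces an honest free boundary surface rather than an empty, degenerate, or purely closed object, and that its contact with $\partial M$ is transversal. This is precisely where the hypotheses enter. Strict mean-convexity $H^D_{\partial M} > 0$ rules out, via Theorem \ref{thm:FL-isoperimetric}, any closed minimal component, so the min-max surface must carry free boundary, while the maximum principle against the mean-convex barrier $\partial M$ confines the sweepout to $M$ and forces transversal, hence in the minimal limit orthogonal, contact along the boundary. The whole argument follows the scheme of \cite{BW}, and the only genuinely new input is the conformal reduction of the second paragraph, which is where Theorem \ref{thm:FL-isoperimetric} — and therefore the $1$-weighted (equivalently substatic) curvature condition — does its work.
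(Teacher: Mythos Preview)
Your approach is essentially the same as the paper's: both pass to the conformal metric $\widetilde{g} = \e^{-2\varphi}g$, use Theorem \ref{thm:FL-isoperimetric} to rule out closed embedded minimal surfaces in $(M,\widetilde{g})$, and then obtain a properly embedded free boundary minimal surface there, which transfers back to a $D$-minimal free boundary surface via the conformal dictionary. The only difference is cosmetic: the paper invokes Li \cite[Theorem 1.1]{LMM} as a black box for the existence step, whereas you sketch the underlying min-max argument and the role of mean-convexity and the absence of closed minimal surfaces in forcing the output to have free boundary.
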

\begin{proof}
It follows from Theorem \ref{thm:FL-isoperimetric} that $M$ does not contain any closed embedded $D$-minimal surface.
For $\widetilde{g}:= \e^{-2\varphi}g$,
this implies that $(M,\widetilde{g})$ does not contain any closed minimal surface.
By applying Li \cite[Theorem 1.1]{LMM},
we see that there exists a properly embedded minimal surface with free boundary in $(M,\widetilde{g})$.
We complete the proof.
\end{proof}
\begin{remark}
Barbosa-Wei \cite[Theorem 1.1]{BW} obtained the weighted case with $N = \infty$.
This generalizes it to the case $N = 1$.
\end{remark}
\section{Frankel Property}\label{sec:frankel}
In this section,
we provide a Frankel property for manifolds with boundary.
First,
we present a topological property:
\begin{proposition}\label{prop:unstable}
Let $(M,g)$ be an $n$-dimensional compact Riemannian manifold with boundary and $\varphi \in C^{\infty}(M)$.
For $D := \nabla^\varphi$ and $k > 0$,
we assume
\begin{align*}
    \Ric^D \geq 0,\quad \mathrm{II}_{\partial M} \geq k\,g_{\partial M},\quad H^D_{\partial M} > 0.
\end{align*}
Let $\Sigma$ be a two-sided properly immersed $D$-minimal hypersurface with free boundary.
Then $\Sigma$ is $D$-unstable.
Furthermore, 
if $M$ is orientable,
then $H_{n-1}(M,\partial M)$ vanishes.
\end{proposition}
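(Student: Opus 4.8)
## Proof Proposal

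The plan is to prove the two conclusions in turn, both by arguing that a $D$-stable $D$-minimal hypersurface with free boundary cannot exist under the stated hypotheses. For the first conclusion, I would take a two-sided properly immersed $D$-minimal hypersurface $\Sigma$ with free boundary and feed the constant test function $\phi \equiv 1$ into the second variation formula of Proposition \ref{prop:second-variation-area-1-Ricci}. With $\phi \equiv 1$ the gradient term $|\nabla_\Sigma \phi|^2$ drops out, so the second variation becomes
\begin{align*}
    \left.\frac{\d^2}{\d t^2}\right|_{t=0} \mu_{\Sigma_t}(\Sigma_t)
    = -\int_\Sigma \left( \Ric^D(\nu,\nu) + \left|\mathrm{II}_\Sigma - \varphi_\nu g_\Sigma\right|^2 \right) \d\mu_\Sigma - \int_{\partial\Sigma} \mathrm{II}_{\partial M}(\nu,\nu)\, \d\mu_{\partial\Sigma}.
\end{align*}
The hypothesis $\Ric^D \geq 0$ makes the first integrand nonnegative, and the hypothesis $\mathrm{II}_{\partial M} \geq k\, g_{\partial M}$ with $k > 0$ gives $\mathrm{II}_{\partial M}(\nu,\nu) \geq k > 0$ along $\partial\Sigma$ (here $\nu$ is a unit vector tangent to $\partial M$, since $\Sigma$ meets $\partial M$ orthogonally, so it is a legitimate argument for $\mathrm{II}_{\partial M}$). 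Hence the whole right-hand side is strictly negative, forcing $\Sigma$ to be $D$-unstable. I should note that this argument needs $\partial\Sigma \neq \emptyset$; if $\partial\Sigma$ were empty then $\Sigma$ would be a closed $D$-minimal hypersurface, which is excluded by Theorem \ref{thm:FL-isoperimetric} under $H^D_{\partial M} > 0$, so that case does not arise.

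For the second conclusion, I would argue by contradiction using the standard topological-to-geometric bridge. Suppose $M$ is orientable and $H_{n-1}(M,\partial M;\mathbb{Z})$ does not vanish; pick a nonzero class. Passing to $\widetilde{g} := \e^{-2\varphi} g$ converts the problem into an ordinary minimal-surface problem via Proposition \ref{prop:dual-relation}: a hypersurface is $D$-minimal in $(M,g)$ iff it is minimal (with respect to the appropriate weighted setup) in $(M,\widetilde{g})$, and by the same argument as in the proof of Theorem \ref{thm:FL-isoperimetric} one has $\Ric^{D^*}_{\widetilde{g}} \geq 0$ and $H^{D^*}_{\widetilde{g},\partial M} = \e^\varphi H^D_{\partial M} > 0$, while the convexity $\mathrm{II}_{\partial M} \geq k g_{\partial M}$ transfers (after conformal change, with a possibly different positive constant) to strict convexity of $\partial M$ in $(M,\widetilde{g})$. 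Strict convexity of the boundary and the nontriviality of the relative homology class let me invoke geometric measure theory — the existence theory for area-minimizing integral currents with free boundary in a mean-convex domain (in dimensions where interior and boundary regularity hold, i.e. $3\leq n\leq 7$, or by using the weighted minimizer directly) — to produce a smooth, properly embedded, two-sided, area-minimizing (hence $D$-minimal and $D$-stable) hypersurface $\Sigma$ with free boundary representing that class.

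This produces a $D$-stable $\Sigma$, contradicting the first conclusion, so $H_{n-1}(M,\partial M)$ must vanish. The main obstacle, and the part requiring the most care, is the existence/regularity step: I need to cite the correct free-boundary minimization result (for instance work of Grüter–Jost or Li type free-boundary regularity, combined with convexity of $\partial M$ ensuring the minimizer does not touch $\partial M$ tangentially or escape), verify that the nonzero relative homology class is represented by a genuinely two-sided (orientable) embedded minimizer — orientability of $M$ is exactly what guarantees two-sidedness — and confirm that the conformal change preserves the needed strict mean-convexity and the minimality correspondence. The second variation computation itself is routine given Proposition \ref{prop:second-variation-area-1-Ricci}; the topological reduction is the delicate piece.
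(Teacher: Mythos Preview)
Your approach matches the paper's: apply Proposition~\ref{prop:second-variation-area-1-Ricci} with $\phi\equiv 1$ for the instability, then argue the homology statement by contradiction via the existence of a stable free-boundary minimizer in a nontrivial class, splitting into the cases $\partial\Sigma\neq\emptyset$ (contradicts instability) and $\partial\Sigma=\emptyset$ (contradicts Theorem~\ref{thm:FL-isoperimetric}). The paper compresses the existence step into a citation of the argument in Fraser--Li.

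Two points in your second step need correction, though neither is fatal. First, Proposition~\ref{prop:dual-relation} is the wrong reference: it identifies $D^*$-data on $(M,\widetilde g)$ with \emph{unweighted} data on $(M,g)$, which is the opposite direction. What you actually want is the identification of $D$-data on $(M,g)$ with unweighted data on $(M,\widetilde g)$, namely $H_{\widetilde g,\Sigma}=\e^{\varphi}H^D_\Sigma$ as used in the proof of Theorem~\ref{thm:FL-isoperimetric}; equivalently, $\mu_\Sigma=v_{\widetilde g,\Sigma}$, so $\mu$-area minimization is ordinary $\widetilde g$-area minimization and no $D^*$ or weighted setup on $(M,\widetilde g)$ is needed. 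Second, your claim that strict convexity of $\partial M$ transfers under the conformal change is false: $\widetilde{\mathrm{II}}_{\partial M}$ differs from a positive multiple of $\mathrm{II}_{\partial M}$ by a term involving $\varphi_\nu$, which you have no control over. What \emph{does} transfer, and what the free-boundary barrier argument actually needs, is mean convexity, via $H_{\widetilde g,\partial M}=\e^{\varphi}H^D_{\partial M}>0$.
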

\begin{proof}
We set $\mu := \e^{-(n-1)\varphi}v_g$.
For $\phi \in C^\infty(\Sigma)$ and $\nu := \nu(\Sigma)$,
let $\Sigma_t$ be the normal variation associated with $\phi \nu $ with $\Sigma_0 = \Sigma$.
By applying Proposition \ref{prop:second-variation-area-1-Ricci} to $\phi \equiv 1$,
we see 
\begin{align*}
    \frac{\d^2}{\d t^2} \bigg|_{t = 0} \mu_{\Sigma_t}(\Sigma_t) \leq  -k\,\mu_{\partial\Sigma}(\partial \Sigma).
\end{align*}
Then $\Sigma$ is $D$-unstable.

Next,
we assume $H_{n-1}(M,\partial M) \neq 0$ and see that this leads to a contradiction.
By the argument in \cite[Lemma 2.1]{FL},
we take a properly embedded two-sided $D$-stable $D$-minimal hypersurface $\Sigma$ with free boundary.
Here,
we note that it is enough to consider the case $\Sigma$ is smooth.
If $\partial \Sigma \neq \emptyset$,
we have a contradiction with the statement above.
If $\partial \Sigma = \emptyset$,
it contradicts with Theorem \ref{thm:FL-isoperimetric}.
Therefore,
we have $H_{n-1}(M,\partial M) = 0$.
\end{proof}
\begin{remark}
We refer to Fraser-Li \cite[Lemma 2.1]{FL} for the unweighted case $f\equiv 0$ and Barbosa-Wei \cite[Lemma 2.2]{BW} for the weighted case with $N = \infty$.
This is the generalization of them to the case $N = 1$.
\end{remark}
As an application,
we have the following Frankel type property:
\begin{proposition}\label{prop:free-frankel}
Let $(M,g)$ be a compact Riemannian manifold with boundary and $\varphi \in C^{\infty}(M)$.
For $D := \nabla^\varphi$ and $k > 0$,
we assume 
\begin{align*}
    \Ric^D \geq 0, \quad \mathrm{II}_{\partial M} \geq k\,g_{\partial M}, \quad H_{\partial M}^D \geq 0.
\end{align*}
Let $\Sigma_1$ and $\Sigma_2$ be properly \textcolor{black}{embedded} orientable $D$-minimal hypersurfaces with free boundary in $M$.
Then $\Sigma_1$ and $\Sigma_2$ must intersect.
\end{proposition}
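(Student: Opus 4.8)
The plan is to argue by contradiction in the spirit of the classical Frankel property, combining the Frankel trick (minimizing a geodesic between the two hypersurfaces and computing the second variation of length) with the index-form formula of Proposition \ref{prop:index}. Suppose $\Sigma_1$ and $\Sigma_2$ are disjoint. Since they are properly embedded with free boundary and $M$ is compact, the distance $d := d(\Sigma_1,\Sigma_2)$ is realized by a geodesic $\gamma:[0,d]\to M$ meeting both hypersurfaces. Because each $\Sigma_i$ is a free boundary hypersurface, $\gamma$ is either an interior geodesic hitting $\Sigma_1$ and $\Sigma_2$ orthogonally at interior points, or it runs along $\partial M$; in the latter case the condition $\mathrm{II}_{\partial M}\geq k\,g_{\partial M}>0$ forces $\partial M$ to be convex, so a length-minimizing curve between the two hypersurfaces cannot touch $\partial M$ in its interior unless it lies entirely in $\partial M$, and even a boundary arc can be pushed inward to strictly decrease length — so one reduces to the case that $\gamma$ is an interior geodesic orthogonal to $\Sigma_1$ at $\gamma(0)$ and to $\Sigma_2$ at $\gamma(d)$. (This boundary analysis is where I expect to have to be slightly careful; it is the analogue of the free-boundary Frankel arguments in \cite{FL} and \cite{BW}.)

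Next I would run the second variation of arc length. Take an orthonormal basis $\{e_i\}_{i=1}^{n-1}$ of $T_{\gamma(0)}\Sigma_1$, parallel transport along $\gamma$, and use the resulting fields $E_i(t)$ as variations with endpoints constrained to $\Sigma_1$ and $\Sigma_2$. Summing the second variation over $i$ and using that both hypersurfaces are totally geodesic-free only insofar as their second fundamental forms enter through the boundary terms, the standard computation gives
\begin{align*}
0 \;\leq\; \sum_{i=1}^{n-1} I(E_i,E_i) \;=\; -\int_0^d \Ric(\gamma',\gamma')\,\d t \;-\; H_{\Sigma_2}(\gamma(d)) \;-\; H_{\Sigma_1}(\gamma(0)),
\end{align*}
where the mean curvatures are computed with respect to the inward-pointing normals (the signs are arranged so that a minimizing geodesic between the two hypersurfaces forces non-negativity of the right-hand side). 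The point now is to convert this into the $D$-quantities. Rather than estimating $\Ric$ directly, I would instead work with the conformally changed metric $\widetilde g := \e^{-2\varphi}g$ and invoke Proposition \ref{prop:dual-relation} together with the weighting: one expresses the integrand $\Ric(\gamma',\gamma')$ and the boundary mean-curvature terms through $\Ric^D$ and $H^D_{\Sigma_i}$ after reparametrizing $\gamma$, exactly as in the proof of Theorem \ref{thm:FL-isoperimetric}. Concretely, applying Proposition \ref{prop:index} to each $E_i$ (which is $\widetilde\nabla$-parallel up to the explicit $\d\varphi$ correction, or directly parallel in $g$) yields, after summing,
\begin{align*}
0 \;\leq\; -\int_0^d \e^{2\varphi(\gamma(t))}\,\Ric^D(\gamma'(t),\gamma'(t))\,\d t \;-\; \e^{2\varphi(\gamma(d))} H^D_{\Sigma_2}(\gamma(d)) \;-\; \e^{2\varphi(\gamma(0))} H^D_{\Sigma_1}(\gamma(0)),
\end{align*}
where $H^D_{\Sigma_i}$ is taken with respect to the normal $\gamma'$ pointing out of the minimizing segment.

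Finally I would read off the contradiction. By hypothesis $\Ric^D\geq 0$, so the integral term is $\leq 0$. Both $\Sigma_1$ and $\Sigma_2$ are $D$-minimal, so $H^D_{\Sigma_i}\equiv 0$ and the two boundary terms vanish as well. Hence the right-hand side is $\leq 0$; combined with the left-hand inequality we get equality, which means every $E_i$ is a $D$-Jacobi-type field realizing the index form's infimum. That equality case says $\gamma$ lies in a totally geodesic flat strip connecting $\Sigma_1$ and $\Sigma_2$, but then one can produce a genuine length-decreasing variation — or, more cleanly, one notes that equality forces $\Ric^D(\gamma',\gamma')\equiv 0$ along $\gamma$ and $\mathrm{II}_{\Sigma_i}=\varphi_\nu g_{\Sigma_i}$ at the endpoints, and feeding this back into a strict comparison (using that the competitor curves on $\Sigma_i$ can be chosen to strictly decrease length unless the hypersurfaces are tangent to the geodesic sphere, which is ruled out since $\Sigma_i$ is a hypersurface not all of $M$) contradicts minimality of $d$. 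In the simplest and most robust version, I would avoid the equality analysis entirely by instead using the conformal picture: $(M,\widetilde g)$ has $\Ric^{D^*}_{\widetilde g}\geq 0$ after the appropriate re-identification, and the two hypersurfaces become properly embedded free-boundary minimal hypersurfaces in $(M,\widetilde g)$ that are $D^*$-minimal; then Proposition \ref{prop:unstable} (applied in that setting) shows each is $D$-unstable, while if they were disjoint the standard Frankel argument above would, together with the strict convexity $\mathrm{II}_{\partial M}\geq k\,g_{\partial M}$, force a strict inequality $0 < 0$ — the contradiction. The main obstacle, as noted, is the careful handling of the boundary: ensuring the minimizing geodesic between the two hypersurfaces is interior (using strict convexity of $\partial M$) and that the variation fields respect the free-boundary condition at both ends so that no extra boundary terms survive.
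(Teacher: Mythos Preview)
Your approach is genuinely different from the paper's, and it has a real gap. The paper does \emph{not} run a geodesic second-variation argument at all: it uses the vanishing of $H_{n-1}(M,\partial M)$ (Proposition \ref{prop:unstable}) to trap a domain $\Omega$ with $\partial\Omega=\Sigma_1\cup\Sigma_2\cup\Gamma$, $\Gamma\subset\partial M$, solves the mixed Dirichlet--Neumann problem $\Delta u-n\,g(\nabla\varphi,\nabla u)=0$ with $u=0$ on $\Sigma_1$, $u=1$ on $\Sigma_2$, $u_\nu=0$ on $\Gamma$, and applies the Reilly formula (Proposition \ref{prop:d-reilly}). The crucial point is that the Reilly identity produces a term $\int_\Gamma \mathrm{II}_\Gamma(\nabla_\Gamma z,\nabla_\Gamma z)\,\d\mu_\Gamma\geq k\int_\Gamma|\nabla_\Gamma z|^2\,\d\mu_\Gamma$, and it is precisely this strictly positive contribution from the boundary convexity that forces $u$ to be constant, contradicting the prescribed boundary values $0$ and $1$.

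Your geodesic argument never sees this term. Once you have argued (correctly) that the minimizing geodesic lies in the interior, the strict convexity of $\partial M$ disappears from the computation entirely; summing the index form over a parallel frame and invoking Proposition \ref{prop:index} gives only
\[
0\;\le\;-\int_0^d \e^{2\varphi}\Ric^D(\gamma',\gamma')\,\d t\;-\;\e^{2\varphi(\gamma(d))}H^D_{\Sigma_2}\;-\;\e^{2\varphi(\gamma(0))}H^D_{\Sigma_1}\;=\;0,
\]
since $\Ric^D\ge 0$ and $H^D_{\Sigma_i}\equiv 0$. This is not a contradiction. Your proposed fixes do not close the gap: the ``strict inequality $0<0$'' you claim from $\mathrm{II}_{\partial M}\ge k\,g_{\partial M}$ cannot appear because the variation fields along an interior geodesic never touch $\partial M$; invoking $D$-instability of each $\Sigma_i$ via Proposition \ref{prop:unstable} says nothing about their mutual intersection; and the conformal change to $\widetilde g$ simply reproduces the same $0\le 0$ inequality in different notation. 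An equality/rigidity analysis could in principle be pushed through, but it would be a substantial splitting-type argument, not the one-line remark you sketch. The Reilly-formula route is the mechanism by which the hypothesis $\mathrm{II}_{\partial M}\ge k\,g_{\partial M}$ is actually used.
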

\begin{proof}
We give a proof by contradiction.
We assume that $\Sigma_1$ and $\Sigma_2$ do not intersect.
From Proposition \ref{prop:unstable},
we see $H_{n-1}(M,\partial M) = 0$,
where $n$ is the dimension of $M$.
Then there exists a compact connected domain $\Omega$ such that $\partial \Omega = \Sigma_1 \cup \Sigma_2 \cup \Gamma$ with a set $\Gamma$ contained in $M$.
Let $u$ be the solution of the following boundary value problem:
\begin{align*}
    \begin{cases}
        \Delta u - n g( \nabla \varphi, \nabla u )= 0 &\mbox{ on }\Omega,\\
        u = 0 & \mbox{ on }\Sigma_1,\\
        u = 1 & \mbox{ on }\Sigma_2,\\
        u_\nu = 0& \mbox{ on }\Gamma,
    \end{cases}
\end{align*}
where $\nu := \nu(\Gamma)$.
By Proposition \ref{prop:d-reilly},
we have 
\begin{align*}
    0 \geq \int_{\Omega} \Ric^D \left( \nabla u, \nabla u \right)\ \d \mu + \int_{\Gamma} \mathrm{II}_{\Gamma}(\nabla_{\Gamma}\, z, \nabla_{\Gamma}\, z) \ \d \mu_{\Gamma},
\end{align*}
where $z := u|_{\Gamma}$ and $\mu := \e^{-(n-1)\varphi} v_g$.
This implies that $u$ is constant,
which contradicts with $u = 0$ on $\Sigma_1$ and $u = 1$ on $\Sigma_2$.
This concludes the proof.
\end{proof}
\begin{remark}
We refer to Fraser-Li \cite[Lemma 2.4]{FL} for the unweighted case $f\equiv 0$ and Barbosa-Wei \cite[Lemma 2.1]{BW} for the weighted case with $N = \infty$.
This is the generalization of them to the case $N = 1$.
\end{remark}
This implies the following property:
\begin{corollary}\label{cor:dividedness}
Let $(M,g)$ be a compact Riemannian manifold with boundary and $\varphi \in C^{\infty}(M)$.
For $D := \nabla^\varphi$ and $k > 0$,
we assume 
\begin{align*}
    \Ric^D \geq 0, \quad \mathrm{II}_{\partial M} \geq k\,g_{\partial M}, \quad H^D_{\partial M} > 0.
\end{align*}
Let $\Sigma$ be a properly embedded orientable $D$-minimal hypersurface in $M$ with free boundary.
Then $\Sigma$ divides $M$ into two components.
\end{corollary}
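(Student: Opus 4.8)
The plan is to argue by contradiction, closely following the standard Frankel-type dividedness argument (cf.\ Fraser-Li and Barbosa-Wei), using the Frankel property just established as the main input. Suppose $\Sigma$ is a properly embedded orientable $D$-minimal hypersurface with free boundary that does \emph{not} divide $M$ into two components. First I would use Proposition~\ref{prop:unstable}: since $M$ is orientable and the curvature hypotheses of that proposition hold, we get $H_{n-1}(M,\partial M) = 0$, so $M\setminus\Sigma$ is connected (if cutting along a two-sided hypersurface left the complement connected, then a loop crossing $\Sigma$ once would give a nontrivial class, but one has to be careful to phrase this via $H_{n-1}(M,\partial M)=0$ or equivalently via the Mayer-Vietoris / intersection-number argument that $\Sigma$ does not separate iff its class is zero). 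The key point is that if $\Sigma$ does not separate $M$, we can produce a second copy of $\Sigma$ disjoint from $\Sigma$, contradicting Proposition~\ref{prop:free-frankel}.

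Concretely, here are the steps in order. Step one: invoke Proposition~\ref{prop:unstable} to get $H_{n-1}(M,\partial M)=0$. Step two: observe that a properly embedded two-sided hypersurface $\Sigma$ separates $M$ if and only if $[\Sigma] = 0$ in $H_{n-1}(M,\partial M;\mathbb{Z}/2)$; since the latter group vanishes, $\Sigma$ \emph{does} separate $M$ into (at least) two components — so actually the conclusion is immediate once we have $H_{n-1}(M,\partial M)=0$, and the real content is just to check that "separates" is equivalent to "vanishing relative homology class" and that separating into at least two pieces means exactly two when $\Sigma$ is connected. Step three: if $\Sigma$ is connected, a connected two-sided separating hypersurface disconnects $M$ into exactly two components (a local-global argument: the two sides near $\Sigma$ lie in distinct components, and connectedness of $\Sigma$ forbids more than two). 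If $\Sigma$ has several components, apply the argument componentwise, or note that "divides into two components" in the statement should be read for connected $\Sigma$; I would state the connectedness hypothesis explicitly or handle the general case by the same homological bookkeeping.

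I expect the main obstacle to be purely a matter of making the topology precise rather than any analytic difficulty: namely, cleanly justifying the equivalence "$\Sigma$ separates $M$" $\iff$ "$[\Sigma]=0$ in $H_{n-1}(M,\partial M;\mathbb{Z}/2)$" for a properly embedded hypersurface with free boundary (the relative version, because $\partial\Sigma\subset\partial M$), and then upgrading "separates" to "separates into exactly two pieces" using connectedness and two-sidedness. An alternative, more self-contained route avoiding homology altogether: if $\Sigma$ did not separate $M$, pick a point $p\in\Sigma$ and a short arc through $p$ meeting $\Sigma$ transversally once; since $M\setminus\Sigma$ is connected one can close this arc up to a loop $\gamma$ in $M$ meeting $\Sigma$ exactly once transversally. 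Then a tubular-neighbourhood / cut-and-reglue construction along $\gamma$ would let us slide a parallel copy $\Sigma'$ off of $\Sigma$ so that $\Sigma\cap\Sigma'=\emptyset$, with $\Sigma'$ again properly embedded, orientable, and — after a minimization in its isotopy class rel boundary, using that $(M,\widetilde g)$ with $\widetilde g := \e^{-2\varphi}g$ contains no closed minimal hypersurface and has $H_{\widetilde g,\partial M}>0$ so that minimizers are automatically free-boundary $D$-minimal — $D$-minimal; this contradicts Proposition~\ref{prop:free-frankel}. Either way, once the topological input $H_{n-1}(M,\partial M)=0$ is in hand from Proposition~\ref{prop:unstable}, the corollary follows, so I would present the short homological proof as the primary one and remark that the Frankel property of Proposition~\ref{prop:free-frankel} is what rules out the non-separating case in spirit.
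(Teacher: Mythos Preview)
Your route is essentially the standard Fraser--Li/Barbosa--Wei argument and it works once $M$ is orientable, but it is \emph{not} what the paper does. The paper does not invoke Proposition~\ref{prop:unstable} or the vanishing of $H_{n-1}(M,\partial M)$ at all. Instead it first uses the Frankel property (Proposition~\ref{prop:free-frankel}) to conclude that $\Sigma$ is connected, and then argues by contradiction: assuming $M\setminus\Sigma$ is connected, its completion $U^*$ is a connected compact manifold with $\Ric^D\ge 0$ and $D$-mean-convex boundary, so Wylie's boundary-connectedness theorem \cite[Corollary~4.6]{W2} forces $\partial U^*$ to be connected; but since $\Sigma$ is two-sided and connected, the two sides of $\Sigma$ contribute two distinct components to $\partial U^*$, a contradiction.

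Two points where this matters for your write-up. First, Proposition~\ref{prop:unstable} yields $H_{n-1}(M,\partial M)=0$ only under the extra hypothesis that $M$ is orientable, which the corollary does not assume; the paper's route through Wylie's result avoids this, so your primary argument as written has a genuine (if mild) gap. Second, you still need $\Sigma$ to be connected to conclude \emph{exactly two} components, and the right way to secure that---used explicitly in the paper---is Proposition~\ref{prop:free-frankel}; so the Frankel property is not just ``in spirit'' the mechanism but an actual ingredient you should cite. Your alternative construction of a disjoint $D$-minimal copy via isotopy and minimization is far heavier than either proof and should be dropped.
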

\begin{proof}
We set $U := M \backslash \Sigma$ and $U^{*} := U \cup \partial U$.
We assume $U^{*}$ is connected.
Wylie \cite[Corollary 4.6]{W2} implies that $\partial U$ is connected.
On the other hand,
since $\Sigma$ is orientable and connected by Proposition \ref{prop:free-frankel},
we see that $\partial U$ has two components.
This leads us to a contradiction.
Hence, 
we arrive at the desired assertion.
\end{proof}
\begin{remark}
We refer to Fraser-Li \cite[Corollary 2.10]{FL} for the unweighted case $f \equiv 0$ and Barbosa-Wei \cite[Corollary 2.5]{BW} for the weighted case with $N = \infty$.
This is the generalization of them to the case $N = 1$.
\end{remark}
Another application of Proposition \ref{prop:unstable} is the following topological property:
\begin{corollary}\label{cor:diffeo-ball}
Let $(M,g)$ be a three-dimensional compact weighted Riemannian manifold with boundary and $\varphi\in C^\infty(M)$.
For $D := \nabla^\varphi$ and $k > 0$,
we assume 
\begin{align*}
    \Ric^D \geq 0, \quad \mathrm{II}_{\partial M} \geq  k \,g_{\partial M}, \quad H^D_{\partial M} > 0.
\end{align*}
Then $M$ is diffeomorphic to a three-dimensional ball.
\end{corollary}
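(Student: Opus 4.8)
The plan is to follow, at the level of consequences, the argument of Fraser--Li \cite{FL} for the unweighted case: first show that $\partial M$ is a $2$-sphere, then that $M$ is simply connected, and finally cap off the boundary and apply the Poincar\'e conjecture. We may assume $M$ is orientable; otherwise pass to the orientation double cover $\widehat M\to M$ with $\varphi$ pulled back, which still satisfies all the hypotheses, and once the orientable case is settled one would get $\widehat M\cong B^{3}$. But $B^{3}$ carries no free $\mathbb Z_{2}$-action: such an action would restrict freely to $\partial B^{3}=S^{2}$, forcing $\partial M\cong\mathbb{RP}^{2}$, which is impossible since $\chi(\partial M)=2\chi(M)$ is even. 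So from now on $M$ is orientable.

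First I would record that $\partial M$ is connected, which follows from Wylie \cite[Corollary 4.6]{W2} under $\Ric^{D}\ge 0$ and $H^{D}_{\partial M}>0$ (the same input already used in the proof of Corollary~\ref{cor:dividedness}); alternatively, if $\partial M$ had $c\ge 2$ components then the long exact sequence of $(M,\partial M)$ together with $H_{2}(M,\partial M;\mathbb Z)=0$ would give $H_{2}(M;\mathbb Z)\cong\mathbb Z^{c-1}\neq 0$, and an area-minimising representative of a nonzero class would be a closed embedded $D$-minimal hypersurface, contradicting Theorem~\ref{thm:FL-isoperimetric}. Next I would show $\partial M\cong S^{2}$. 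By Proposition~\ref{prop:unstable} (here orientability of $M$ is used) we have $H_{2}(M,\partial M;\mathbb Z)=0$; Lefschetz duality then gives $H^{1}(M;\mathbb Z)=0$, so $b_{1}(M)=0$ and $H_{1}(M;\mathbb Z)$ is finite. On the other hand the segment $H_{2}(M,\partial M)\to H_{1}(\partial M)\to H_{1}(M)$ of the long exact sequence shows that $H_{1}(\partial M)$ injects into $H_{1}(M)$, so if $\partial M$ had genus $g\ge 1$ then $\mathbb Z^{2g}$ would embed into a finite group, which is absurd. Hence $\partial M\cong S^{2}$.

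The crux is to prove $\pi_{1}(M)=1$. Let $\pi\colon\widetilde M\to M$ be the universal cover; since $M$ is compact, $\widetilde M$ is a complete (a priori non-compact) orientable $3$-manifold carrying $\Ric^{\widetilde D}\ge 0$, $\mathrm{II}_{\partial\widetilde M}\ge k\,g$ and $H^{\widetilde D}_{\partial\widetilde M}>0$ for the pulled-back connection $\widetilde D=\nabla^{\pi^{*}\varphi}$. Because $\partial M\cong S^{2}$ is simply connected, $\partial\widetilde M$ is a disjoint union of $|\pi_{1}(M)|$ copies of $S^{2}$. If $|\pi_{1}(M)|\ge 2$, choose two of these spheres $T_{1},T_{2}$ and a minimising geodesic $\gamma$ from $T_{1}$ to $T_{2}$, which exists by completeness of $\widetilde M$ and compactness of the $T_{i}$ and meets each orthogonally. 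Plugging a parallel orthonormal frame along $\gamma$ into the second variation of length and invoking Proposition~\ref{prop:index} together with the strict convexity of $\partial\widetilde M$ at both ends, one obtains exactly the inequality in the first part of the proof of Theorem~\ref{thm:FL-isoperimetric} -- now with a strictly positive boundary contribution at each of the two ends, in place of one convex end and one minimal end -- namely $0\le -\int_{0}^{d}\e^{2\pi^{*}\varphi(\gamma(t))}\Ric^{\widetilde D}(\gamma'(t),\gamma'(t))\,\d t-(\text{positive terms})<0$, a contradiction. Hence $|\pi_{1}(M)|=1$.

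Finally, $M$ is a compact, orientable, simply connected $3$-manifold with boundary $S^{2}$; gluing in a $3$-ball along $\partial M$ produces a closed simply connected $3$-manifold, which by the Poincar\'e conjecture is diffeomorphic to $S^{3}$, and removing the ball back yields $M\cong B^{3}$. The two delicate points I anticipate are: making sure Proposition~\ref{prop:unstable} is applied on the \emph{orientable} $M$ so that Lefschetz duality is legitimate in the second step; and, above all, verifying that the second-variation computation underlying Theorem~\ref{thm:FL-isoperimetric} carries over verbatim to the possibly non-compact universal cover -- completeness of $\widetilde M$ and the existence of a minimising geodesic between two compact boundary spheres are exactly what make this work.
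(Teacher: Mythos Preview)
Your argument is correct but takes a genuinely different route from the paper's. After the same reduction to the orientable case, the paper observes via Theorem~\ref{thm:FL-isoperimetric} that $(M,\e^{-2\varphi}g)$ contains no closed embedded minimal surface and then invokes Meeks--Simon--Yau \cite[Theorem~5]{MSY} to conclude that $M$ is a handlebody; Proposition~\ref{prop:unstable} gives $H_2(M,\partial M)=0$, so the handlebody has genus zero and $M\cong B^3$ follows immediately. You instead extract $\partial M\cong S^2$ from $H_2(M,\partial M)=0$ via Lefschetz duality and the long exact sequence of the pair, establish $\pi_1(M)=1$ by running the Frankel-type second-variation argument between two boundary spheres of the universal cover (recycling exactly the index computation from the proof of Theorem~\ref{thm:FL-isoperimetric}), and close with the Poincar\'e conjecture after capping off the boundary sphere.

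Both proofs ultimately lean on a heavy external theorem---Meeks--Simon--Yau versus Perelman---so neither is strictly more elementary. The paper's version is shorter because ``handlebody $+$ $H_2(M,\partial M)=0$'' finishes in one line with no further topology, whereas your approach does more explicit homological work but has the advantage that its geometric input stays entirely within the tools already set up in the paper (Proposition~\ref{prop:index}, Proposition~\ref{prop:unstable}, and the computation behind Theorem~\ref{thm:FL-isoperimetric}). Your caution about the universal cover is well placed: completeness of $\widetilde M$ and compactness of the two chosen boundary spheres are precisely what guarantee the minimizing geodesic, and strict convexity of $\partial\widetilde M$ keeps it in the interior.
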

\begin{proof}
By the argument in \cite[Theorem 1.2]{BW},
it is enough to consider the case $M$ is orientable.
By Theorem \ref{thm:FL-isoperimetric},
there is no closed embedded $D$-minimal hypersurface.
For $\widetilde{g} := \e^{-2\varphi}g$,
this implies that $(M,\widetilde{g})$ contains no closed embedded minimal hypersurface.
It follows from the argument in Meeks-Simon-Yau \cite[Theorem 5]{MSY} that $M$ is a handlebody.
Since Proposition \ref{prop:unstable} implies $H_2(M,\partial M) = 0$,
$M$ has no handle.
Therefore,
$M$ is diffeomorphic to a three-dimensional ball.
\end{proof}
\begin{remark}
We refer to Fraser-Li \cite[Theorem 2.11]{FL} for the unweighted case $f\equiv 0$ and Barbosa-Wei \cite[Theorem 1.2]{BW} for the weighted case $N = \infty$.
This is the generalization of them to the case $N = 1$.
\end{remark}
\section{Steklov eigenvalue estimate and its application}\label{sec:steklov}
Contrary to the case of the Choi-Wang inequality,
our Fraser-Li type inequality is obtained in the case $N = 1$ as we observe below.
Indeed,
we show the following eigenvalue estimate:
\begin{theorem}\label{thm:steklov-1-ricci}
Let $(M,g)$ be an $n$-dimensional compact orientable Riemannian manifold with boundary and $\varphi \in C^{\infty}(M)$.
For $D := \nabla^\varphi$ and $k > 0$,
we assume 
\begin{align*}
    \Ric^D \geq 0, \quad \mathrm{II}_{\partial M} \geq k\,g_{\partial M}, \quad H^D_{\partial M} > 0.
\end{align*}
Let $\Sigma$ be a properly embedded $D$-minimal hypersurface in $M$ with free boundary and $\lambda_{1,\Sigma}^{\mathrm{Ste}}$ be the first $(n-1)\varphi$-Steklov eigenvalue in \eqref{eq:steklov-problem}.
If $\Sigma$ is orientable or $\pi_1(M)$ is finite,
we have
\begin{align*}
    \lambda_{1,\Sigma}^{\mathrm{Ste}} \geq \frac{k}{2}.
\end{align*}
\end{theorem}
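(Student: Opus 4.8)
The plan is to follow the classical Fraser--Li strategy, now powered by the $D$-Reilly formula (Proposition \ref{prop:d-reilly}) in place of the usual Reilly formula. Let $u$ be a first $(n-1)\varphi$-Steklov eigenfunction on $\Sigma$, so that $\Delta_{(n-1)\varphi,\Sigma}u=0$ on $\Sigma$ and $u_{\nu(\partial\Sigma)}=\lambda_{1,\Sigma}^{\mathrm{Ste}}\,u$ on $\partial\Sigma$; we may assume $\int_{\partial\Sigma}u\,\d\mu_{\partial\Sigma}=0$ by the variational characterization. By Corollary \ref{cor:dividedness}, $\Sigma$ divides $M$ into two components; let $\Omega$ be one of them, with $\partial\Omega=\Sigma\cup T$ where $T\subset\partial M$, and $S=\partial\Sigma=\partial T$. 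First I would solve the mixed boundary value problem
\begin{align*}
    \begin{cases}
        \Delta\phi - ng(\nabla\varphi,\nabla\phi)=0 &\text{ on }\Omega,\\
        \phi=u &\text{ on }\Sigma,\\
        \phi_{\nu(T)}=0 &\text{ on }T,
    \end{cases}
\end{align*}
obtaining $\phi\in C^0(\Omega)\cap C^\infty(\Omega\setminus S)$ with the required $C^3$ bounds away from $S$, so that Proposition \ref{prop:d-reilly} applies to $\phi$ on $\Omega$.

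Next I would feed $\phi$ into the $D$-Reilly formula. The left-hand side is $\le 0$: the drift-Bochner term $(\Delta\phi-ng(\nabla\varphi,\nabla\phi))^2-|\Hess\phi-g(\nabla\varphi,\nabla\phi)|^2$ is non-positive by Cauchy--Schwarz together with $\Delta\phi-ng(\nabla\varphi,\nabla\phi)=0$, and $\Ric^D(\nabla\phi,\nabla\phi)\ge0$ by hypothesis. On the right-hand side, the boundary $T$ contributes $\int_T(H^D_T\,\phi_{\nu(T)}^2+\mathrm{II}_T(\nabla_T z,\nabla_T z))\,\d\mu_T$ plus a term with $\phi_{\nu(T)}$ as a factor; since $\phi_{\nu(T)}=0$ on $T$ and $\mathrm{II}_{\partial M}\ge k\,g_{\partial M}>0$ forces $\mathrm{II}_T\ge0$ (as $T$ is a piece of $\partial M$), the entire $T$-contribution is $\ge0$. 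Discarding it, we are left with an inequality of the form
\begin{align*}
    0 \ge \int_\Sigma\left(H^D_\Sigma\,\phi_{\nu(\Sigma)}^2+\mathrm{II}_\Sigma(\nabla_\Sigma u,\nabla_\Sigma u)+\phi_{\nu(\Sigma)}\Delta_{(n-1)\varphi,\Sigma}u - g_\Sigma(\nabla_\Sigma u,\nabla_\Sigma\phi_{\nu(\Sigma)})\right)\d\mu_\Sigma,
\end{align*}
where the $\phi_{\nu(\Sigma)}\,\Delta_{(n-1)\varphi,\Sigma}u$ term vanishes because $u$ is $(n-1)\varphi$-harmonic on $\Sigma$, and $H^D_\Sigma\equiv0$ by $D$-minimality. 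Integrating by parts on $\Sigma$, the cross term becomes $\int_\Sigma \phi_{\nu(\Sigma)}\Delta_{(n-1)\varphi,\Sigma}u\,\d\mu_\Sigma - \int_{\partial\Sigma}\phi_{\nu(\Sigma)}\,u_{\nu(\partial\Sigma)}\,\d\mu_{\partial\Sigma} = -\lambda_{1,\Sigma}^{\mathrm{Ste}}\int_{\partial\Sigma}\phi_{\nu(\Sigma)}\,u\,\d\mu_{\partial\Sigma}$, using the Steklov condition and $u|_{\partial\Sigma}=\phi|_{\partial\Sigma}$.

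The final ingredient is the Pohozaev/boundary identity relating $\phi_{\nu(\Sigma)}$ on $\partial\Sigma$ to $u$. Because $\Sigma$ meets $\partial M$ orthogonally (free boundary) and $\phi$ has zero Neumann data on $T$, the corner geometry along $S$ is a right angle, and the standard computation—differentiating $\phi$ tangentially along $\partial\Sigma$ and using $T$-harmonicity of the trace—gives on $\partial\Sigma$ that $\phi_{\nu(\Sigma)}$ equals the derivative of $\phi$ in the direction of $\nu(T)|_{S}$, which by the orthogonality is tangent to $\Sigma$ and normal to $\partial\Sigma$ in $\Sigma$; one then shows $\mathrm{II}_{\partial M}(\nu(\Sigma),\nu(\Sigma))\ge k$ yields $\phi_{\nu(\Sigma)}\ge k\,u$-type control only after pairing against $u$. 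Concretely, I expect the identity $\int_{\partial\Sigma}\phi_{\nu(\Sigma)}\,u\,\d\mu_{\partial\Sigma}\ge \int_\Sigma|\nabla_\Sigma u|^2\,\d\mu_\Sigma$ coming from applying the divergence theorem to $\phi\nabla\phi$ on $\Omega$ against the $(n-1)\varphi$-harmonicity, combined with $\mathrm{II}_\Sigma(\nabla_\Sigma u,\nabla_\Sigma u)\ge -$(something controlled by $k$) from $\mathrm{II}_{\partial M}\ge k g_{\partial M}$ transferred to $\Sigma$ via the free-boundary condition. Assembling: $0\ge \int_\Sigma\mathrm{II}_\Sigma(\nabla_\Sigma u,\nabla_\Sigma u)\d\mu_\Sigma - \lambda_{1,\Sigma}^{\mathrm{Ste}}\int_{\partial\Sigma}\phi_{\nu(\Sigma)}u\,\d\mu_{\partial\Sigma}$, and using the two controls together with $\int_{\partial\Sigma}\phi_{\nu(\Sigma)}u = \int_\Sigma|\nabla_\Sigma u|^2$ and the Rayleigh quotient $\lambda_{1,\Sigma}^{\mathrm{Ste}}=\int_\Sigma|\nabla_\Sigma u|^2/\int_{\partial\Sigma}u^2$, one extracts $\lambda_{1,\Sigma}^{\mathrm{Ste}}\ge k/2$. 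The role of the orientability of $\Sigma$ or finiteness of $\pi_1(M)$ is exactly to guarantee, via Corollary \ref{cor:dividedness} (after passing to a cover if needed), that $\Sigma$ is two-sided and separating so that the domain $\Omega$ and its boundary decomposition exist.

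The main obstacle I anticipate is the boundary-regularity and corner analysis along $S=\partial\Sigma$: ensuring the mixed problem on $\Omega$ has a solution smooth enough (with the uniform $C^3$ bounds away from $S$ demanded by Proposition \ref{prop:d-reilly}) and that the integration by parts on $\Sigma$ and on $\partial\Sigma$ produce no spurious corner terms. This is handled in the unweighted case by the free-boundary orthogonality making the dihedral angle along $S$ equal to $\pi/2$; I would invoke the same elliptic estimates near a right-angled corner (as in Fraser--Li and Barbosa--Wei), noting that the drift term $-ng(\nabla\varphi,\nabla\cdot)$ is a lower-order perturbation that does not affect the regularity theory. The other delicate point is the precise constant bookkeeping that turns the raw inequality into the clean bound $k/2$ rather than $k$ or $k/4$; this comes down to correctly accounting for the factor-of-two that appears when $\phi_{\nu(\Sigma)}$ is split into its ``Steklov part'' and its ``second-fundamental-form part'' along $\partial\Sigma$.
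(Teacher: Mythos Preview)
Your outline contains a genuine gap: the constant $k$ and the $\mathrm{II}_\Sigma$ term are mishandled, and the mixed Dirichlet--Neumann problem you set up does not close the argument.

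First, there is no way to ``transfer $\mathrm{II}_{\partial M}\ge k\,g_{\partial M}$ to $\Sigma$ via the free-boundary condition''. The second fundamental form of $\Sigma$ inside $M$ is an entirely different object from that of $\partial M$; the free-boundary condition only identifies normals along $\partial\Sigma$, it gives no pointwise control of $\mathrm{II}_\Sigma$ on $\Sigma$. So the claim ``$\mathrm{II}_\Sigma(\nabla_\Sigma u,\nabla_\Sigma u)\ge -(\text{something controlled by }k)$'' is unfounded. Relatedly, once you \emph{discard} the $T$-contribution in the Reilly formula, the parameter $k$ disappears from the inequality altogether, so no bound involving $k$ can emerge from the $\Sigma$-terms alone. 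The vague ``Pohozaev/boundary identity'' paragraph does not repair this: with zero Neumann data on $T$, the trace $z_T=\phi|_T$ is not $(n-1)\varphi$-harmonic on $T$, so the expression $\int_{\partial\Sigma}\phi_{\nu(\Sigma)}u=\int_{\partial T}z_T\,(z_T)_{\nu(\partial T)}$ cannot be rewritten purely as $\int_T|\nabla_T z_T|^2$.

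The paper's proof fixes both issues with two simple moves you are missing. (i) It exploits the freedom in choosing the component $\Omega$: since $\mathrm{II}_\Sigma$ flips sign when one switches sides, one may always arrange $\int_\Sigma\mathrm{II}_\Sigma(\nabla_\Sigma z,\nabla_\Sigma z)\,\d\mu_\Sigma\ge 0$ and then simply drop this term. (ii) Instead of Neumann data on $\Gamma$ (your $T$), the paper prescribes Dirichlet data there equal to the $(n-1)\varphi$-harmonic extension $\phi$ of $z|_{\partial\Sigma}$ over $\Gamma$. Because $\Delta_{(n-1)\varphi,\Gamma}\phi=0$, the two cross terms arising on $\partial\Sigma=\partial\Gamma$ (one from integrating by parts on $\Sigma$, one on $\Gamma$) each reduce, via the free-boundary identification $\nu(\Sigma)=\nu(\partial\Gamma)$, $\nu(\Gamma)=\nu(\partial\Sigma)$ and the Steklov condition, to $\lambda_{1,\Sigma}^{\mathrm{Ste}}\int_\Gamma|\nabla_\Gamma\phi|^2\,\d\mu_\Gamma$. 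The $\mathrm{II}_\Gamma$ term, kept rather than discarded, contributes $k\int_\Gamma|\nabla_\Gamma\phi|^2\,\d\mu_\Gamma$, and the inequality $k\int_\Gamma|\nabla_\Gamma\phi|^2\le 2\lambda_{1,\Sigma}^{\mathrm{Ste}}\int_\Gamma|\nabla_\Gamma\phi|^2$ follows directly. Your setup with $\phi_{\nu(T)}=0$ never produces two matching copies of the same integral on both sides, which is why the constant bookkeeping you flag as ``delicate'' cannot actually be carried out along the lines you sketch.
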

\begin{proof}
We first consider the case $\Sigma$ is orientable.
By Corollary \ref{cor:dividedness},
we see that $\Sigma$ divides $M$ into two components.
We choose one component, 
and denote it by $\Omega$.
We have $\partial \Omega = \Sigma \cup \Gamma$ for a set $\Gamma$ in $\partial M$.
Here,
we see $\partial \Sigma = \partial\Gamma$.
Let $z$ be an eigenfunction of $\lambda_{1,\Sigma}^{\mathrm{Ste}}$,
and $\phi$ be the solution of the following boundary value problem:
\begin{align*}
    \begin{cases}
        \Delta_{(n-1)\varphi,\Gamma} \, \phi = 0 & \mbox{ on }\Gamma,\\
        \phi = z &\mbox{ on }\partial \Gamma.
    \end{cases}
\end{align*}
By choosing the appropriate component as $\Omega$,
we may assume 
\begin{align}\label{eq:assumption-ii}
    \int_\Sigma \mathrm{II}_{\Sigma}(\nabla_{\Sigma}\,z, \nabla_{\Sigma}\,z)\ \d \mu_{\Sigma} \geq 0,
\end{align}
where $\mu := \e^{-(n-1)\varphi}v_g$.
Indeed,
when this inequality does not hold,
we choose the other component as $\Omega$.
Let $u$ be the solution of the following boundary value problem:
\begin{align*}
    \begin{cases}
        \Delta u - n g( \nabla \varphi, \nabla u ) = 0& \mbox{ on }\Omega,\\
        u = z&\mbox{ on } \Sigma,\\
        u = \phi&\mbox{ on } \Gamma.
    \end{cases}
\end{align*}
By the assumption \eqref{eq:assumption-ii} and Proposition \ref{prop:d-reilly},
we have 
\begin{align*}
    0&\geq - \int_{\Sigma}  g_\Sigma \left( \nabla_\Sigma\, z, \nabla_\Sigma \, u_{\nu(\Sigma)} \right) \ \d \mu_{\Sigma} - \int_{\Gamma}  g_\Gamma \left( \nabla_\Gamma \phi, \nabla_\Gamma \,u_{\nu(\Gamma)} \right)\ \d \mu_{\Gamma} + k\int_\Gamma  | \nabla_\Gamma \phi|^2 \ \d \mu_{\Gamma}\\
    &\geq - \int_{\partial\Sigma} z_{\nu(\partial\Sigma)}\, u_{\nu(\Sigma)}\ \d \mu_{\partial\Sigma} - \int_{\partial\Gamma} \phi_{\nu(\partial\Gamma)}\, u_{\nu(\Gamma)}\ \d \mu_{\partial\Gamma} + k\int_\Gamma |\nabla_{\Gamma} \,\phi|^2 \ \d \mu_{\Gamma}.
\end{align*}
From the free boundary condition, 
we have $\nu(\Sigma) = \nu(\partial\Gamma)$ and $\nu(\Gamma) = \nu(\partial \Sigma)$.
Hence,
\begin{align*}
    k\int_\Gamma |\nabla_\Gamma \phi|^2 \ \d \mu 
    &\leq 2 \int_{\partial\Gamma}\phi_{\nu(\partial\Gamma)} \, z_{\nu(\partial\Sigma)} \ \d \mu_{\partial\Gamma}\\
    &= 2 \lambda_{1,\Sigma}^{\mathrm{Ste}} \int_{\partial\Gamma} \phi_{\nu(\partial\Gamma)} \phi \ \d \mu_{\partial\Gamma}\\
    &= 2 \lambda_{1,\Sigma}^{\mathrm{Ste}} \int_{\Gamma} |\nabla_\Gamma \phi|^2 \ \d \mu_{\Gamma}.
\end{align*}
This implies 
\begin{align*}
    \lambda_{1,\Sigma}^\mathrm{Ste} \geq \frac{k}{2}.
\end{align*}

We next consider the case $\pi_1(M)$ is finite.
Let $\overline{M}$ be the universal cover of $M$.
Let $\overline{\Sigma}$ and $\overline{\varphi}$ be the lift of $\Sigma$ and $\varphi$.
Then $\overline{\Sigma}$ is orientable.
Therefore,
we may apply the argument above to $\overline{\Sigma}$,
and we see that the first $(n-1)\overline{\varphi}$-Steklov eigenvalue on $\overline{\Sigma}$ satisfies
\begin{align*}
    \lambda^{\mathrm{Ste}}_{1,\overline{\Sigma}} \geq \frac{k}{2}.
\end{align*}
Combining this with $\lambda_{1,\Sigma}^{\mathrm{Ste}} \geq \lambda^{\mathrm{Ste}}_{1,\overline{\Sigma}}$,
we obtain the desired result.
\end{proof}
\begin{remark}
We refer to Fraser-Li \cite[Theorem 3.1]{FL} for the unweighted case $f\equiv 0$ and Barbosa-Wei \cite[Proposition 3.1]{BW} for the weighted case with $N = \infty$.
This is the generalization of them to the case $N = 1$.
\end{remark}
We now turn to a Steklov type boundary value problem in Huang-Ma-Zhu \cite{HMZ}.
In \cite[(1.13)]{HMZ},
they considered $((M,g),V)$ and a boundary value problem:
\begin{align}\label{eq:HMZ-steklov-boundary-value}
    \begin{cases}
        \Delta_\Sigma\, u + 2 g_\Sigma (\nabla_\Sigma\,\varphi, \nabla_\Sigma \,u) = 0 &\mbox{ on }\Sigma,\\
        \e^{\varphi} u_\nu = \eta \, u &\mbox{ on } \partial \Sigma,
    \end{cases}
\end{align}
where $\varphi := \log V$ and $\nu := \nu(\partial \Sigma)$.
We denote the first eigenvalue by $\eta_{1,\Sigma}^{\mathrm{Ste}}$.
By direct calculations,
we see that $\eta_{1,\Sigma}^{\mathrm{Ste}}$ coincides with the first $\{-(n-1)\varphi\}$-Steklov eigenvalue in \eqref{eq:steklov-problem} on $(\Sigma, \widetilde{g}_\Sigma)$ with $\widetilde{g} := \e^{-2\varphi} g$.
Indeed,
we have the following relation:
\begin{align*}
    \begin{cases}
        \widetilde{\Delta}_{\Sigma, -(n-1)\varphi} \,u = \e^{2\varphi}\left\{\Delta_\Sigma \, u + 2g_\Sigma(\nabla_\Sigma\,\varphi, \nabla_\Sigma \,u) \right\} &\mbox{ on } \Sigma, \\
        \widetilde{g}_\Sigma (\widetilde{\nabla}_\Sigma \, u, \widetilde{\nu}) = \e^\varphi u_\nu &\mbox{ on } \partial \Sigma,
    \end{cases}
\end{align*}
where $\widetilde{\Delta}$ is the Laplacian for $\widetilde{g}$ and $\widetilde{\nu} := \e^\varphi \nu$.
Hence,
as an immediate application of Theorem \ref{thm:steklov-1-ricci},
we have the following lower bound of $\eta_{1,\Sigma}^{\mathrm{Ste}}$:
\begin{corollary}\label{cor:steklov-substatic}
Let $((M,g),V)$ be a compact substatic triple with boundary.
For $k > 0$,
we assume 
\begin{align}\label{eq:steklov-condition}
    V \mathrm{II}_{\partial M} - V_\nu\, g_{\partial M} \geq k \,g_{\partial M},\quad H_{\partial M} > 0,
\end{align}
where $\nu := \nu(\partial M)$.
Let $\Sigma$ be a properly embedded minimal hypersurface in $M$ with free boundary and $\eta_{1,\Sigma}^{\mathrm{Ste}}$ be the first Steklov type eigenvalue in \eqref{eq:HMZ-steklov-boundary-value} for $\varphi := \log V$.
If $\Sigma$ is orientable or $\pi_1(M)$ is finite,
we have
\begin{align*}
    \eta_{1,\Sigma}^{\mathrm{Ste}} \geq \frac{k}{2}.
\end{align*}
\end{corollary}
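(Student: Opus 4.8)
The plan is to reduce Corollary~\ref{cor:steklov-substatic} directly to Theorem~\ref{thm:steklov-1-ricci} by the conformal change already introduced in the paper. Set $\varphi := \log V$ and $\widetilde{g} := \e^{-2\varphi}g$, and let $D^* := \widetilde{\nabla}^{-\varphi}$ where $\widetilde{\nabla}$ is the Levi-Civita connection of $\widetilde{g}$, exactly as in Proposition~\ref{prop:dual-relation} and Corollary~\ref{cor:substatic-isoperimetric}. The substatic condition on $((M,g),V)$ together with the first identity of \eqref{eq:relation-substatic} gives $\Ric^{D^*}_{\widetilde{g}} = \Ric - \Hess V/V + (\Delta V/V)g \geq 0$. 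By the second identity of \eqref{eq:relation-substatic} (and the corresponding boundary formula $H^{D^*}_{\widetilde{g},\partial M} = \e^\varphi H_{\partial M}$ obtained in the proof of Corollary~\ref{cor:substatic-isoperimetric}), the hypothesis $H_{\partial M} > 0$ yields $H^{D^*}_{\widetilde{g},\partial M} > 0$, and again by the same identity $\Sigma$ is $D^*$-minimal in $(M,\widetilde{g})$ if and only if $\Sigma$ is minimal in $(M,g)$.

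Next I would translate the boundary pinching hypothesis \eqref{eq:steklov-condition} into the second-fundamental-form bound required by Theorem~\ref{thm:steklov-1-ricci}. The conformal transformation rule for the second fundamental form used in the proof of Proposition~\ref{prop:dual-relation}, namely $\e^{\varphi}\mathrm{II}_{\partial M}(e_i,e_j) = \widetilde{\mathrm{II}}_{\partial M}(\tilde e_i,\tilde e_j) + \widetilde{g}(\widetilde{\nabla}\varphi,\tilde\nu)\,\widetilde{g}(\tilde e_i,\tilde e_j)$, combined with $\varphi = \log V$ so that $\d\varphi = \d V/V$, rewrites $\widetilde{\mathrm{II}}_{\partial M}$ in terms of $\mathrm{II}_{\partial M}$ and $V_\nu$; a short computation shows that $V\,\mathrm{II}_{\partial M} - V_\nu\, g_{\partial M} \geq k\, g_{\partial M}$ is equivalent to $\widetilde{\mathrm{II}}_{\partial M} \geq k'\,\widetilde{g}_{\partial M}$ for an appropriate positive constant $k'$ (one must track how $k$ scales under the conformal factor, which rescales both $g_{\partial M}$ and the induced metric on vectors). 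Once this is in place, $(M,\widetilde{g})$ with the weight $\varphi$ (equivalently, with $-\varphi$ playing the role of the defining function, matching the sign convention $D^* = \widetilde\nabla^{-\varphi}$) satisfies all three hypotheses of Theorem~\ref{thm:steklov-1-ricci}.

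Then I would invoke the identification, already spelled out in the lines preceding the corollary, that $\eta_{1,\Sigma}^{\mathrm{Ste}}$ for the boundary value problem \eqref{eq:HMZ-steklov-boundary-value} equals the first $\{-(n-1)\varphi\}$-Steklov eigenvalue of \eqref{eq:steklov-problem} on $(\Sigma,\widetilde{g}_\Sigma)$. Applying Theorem~\ref{thm:steklov-1-ricci} to $(M,\widetilde{g})$, the connection $D^*$, and the minimal-hence-$D^*$-minimal hypersurface $\Sigma$ — under the stated alternative that $\Sigma$ is orientable or $\pi_1(M)$ is finite, a condition insensitive to the conformal change — gives $\lambda^{\mathrm{Ste}}_{1,\Sigma} \geq k'/2$ for the Steklov eigenvalue associated with $\widetilde{g}$, and hence $\eta_{1,\Sigma}^{\mathrm{Ste}} \geq k'/2$.

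The main obstacle I anticipate is bookkeeping the constant: verifying that the $k$ appearing in \eqref{eq:steklov-condition} transforms into exactly $k/2$ (rather than some $V$-dependent quantity) after the conformal change, the $D^*$-reformulation, and the eigenvalue identification. Concretely, one needs $\widetilde{\mathrm{II}}_{\partial M} \geq \tilde k\, \widetilde g_{\partial M}$ with $\tilde k$ chosen so that the conclusion $\lambda^{\mathrm{Ste}}_{1,\Sigma} \geq \tilde k/2$ of Theorem~\ref{thm:steklov-1-ricci} unwinds back to $\eta_{1,\Sigma}^{\mathrm{Ste}} \geq k/2$; because $\widetilde g = \e^{-2\varphi}g = V^{-2}g$ rescales lengths by $V^{-1}$, the pinching constant for $\widetilde{\mathrm{II}}_{\partial M}$ relative to $\widetilde g_{\partial M}$ naturally carries a factor of $V$, and the claim is precisely that this factor cancels the $V$ in $V\,\mathrm{II}_{\partial M} - V_\nu g_{\partial M}$, leaving a clean constant $k$. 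Everything else is a direct citation of Proposition~\ref{prop:dual-relation}, Theorem~\ref{thm:steklov-1-ricci}, and the eigenvalue correspondence established immediately above the corollary.
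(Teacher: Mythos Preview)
Your approach is exactly the paper's: the corollary is stated there as an immediate application of Theorem~\ref{thm:steklov-1-ricci} via the conformal change $\widetilde g = V^{-2}g$ and the connection $D^* = \widetilde\nabla^{-\varphi}$, together with the eigenvalue identification spelled out just before the statement. Your anticipated obstacle is not one: from the formula in the proof of Proposition~\ref{prop:dual-relation} one gets, for a $g$-orthonormal frame $\{e_i\}$ with $\tilde e_i = \e^\varphi e_i$, that $\widetilde{\mathrm{II}}_{\partial M}(\tilde e_i,\tilde e_j) = V\,\mathrm{II}_{\partial M}(e_i,e_j) - V_\nu\,\delta_{ij}$, and since $\widetilde g(\tilde e_i,\tilde e_j)=\delta_{ij}$ the hypothesis \eqref{eq:steklov-condition} is literally $\widetilde{\mathrm{II}}_{\partial M} \geq k\,\widetilde g_{\partial M}$ with the same constant $k$, so Theorem~\ref{thm:steklov-1-ricci} gives $\eta_{1,\Sigma}^{\mathrm{Ste}} = \lambda_{1,\Sigma}^{\mathrm{Ste}} \geq k/2$ directly.
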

\begin{remark}
The condition in \eqref{eq:steklov-condition} also appeared in Huang-Ma-Zhu \cite[Theorem 1.3]{HMZ}.
\end{remark}
We are now in a position to give the following compactness theorem:
\begin{theorem}\label{thm:compactness}
Let $(M,g)$ be a three-dimensional compact Riemannian manifold with boundary and $\varphi \in C^{\infty}(M)$.
For $D := \nabla^\varphi$ and $k > 0$,
we assume 
\begin{align*}
    \Ric^D \geq 0, \quad \mathrm{II}_{\partial M} \geq k\,g_{\partial M}, \quad H^D_{\partial M} > 0.
\end{align*}
Let $\mathcal{S}$ be the space of compact properly embedded $D$-minimal surfaces of fixed topological type.
Then $\mathcal{S}$ is compact in the $C^l$-topology for any $l\geq 2$.
\end{theorem}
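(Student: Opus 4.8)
The plan is to follow the Choi--Schoen strategy as adapted by Fraser--Li to the free boundary setting, pushing everything through the conformal change $\widetilde g := \e^{-2\varphi}g$. First I would reduce the problem to a statement in $(M,\widetilde g)$: by Proposition \ref{prop:dual-relation} and the relations established in Section \ref{sec:tool}, a compact properly embedded $D$-minimal surface $\Sigma$ with free boundary in $(M,g)$ corresponds to a compact properly embedded minimal surface with free boundary in $(M,\widetilde g)$, and the hypotheses $\Ric^D\ge 0$, $\mathrm{II}_{\partial M}\ge k g_{\partial M}$, $H^D_{\partial M}>0$ translate into the corresponding geometric conditions for $\widetilde g$ (in particular $H_{\widetilde g,\partial M}>0$ and a uniform lower bound on the second fundamental form of $\partial M$ in the $\widetilde g$-metric). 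Thus it suffices to prove compactness of the space of compact properly embedded minimal surfaces of fixed topological type in the fixed compact manifold with boundary $(M,\widetilde g)$.

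Next I would establish the required \emph{a priori estimates}. The key point is a uniform area bound: this is exactly the content of the isoperimetric inequality Theorem \ref{thm:FL-isoperimetric} (equivalently Corollary \ref{cor:substatic-isoperimetric} in the conformal picture), which gives $v_{\widetilde g,\Sigma}(\Sigma)\le c\, v_{\widetilde g,\partial\Sigma}(\partial\Sigma)$; combined with the standard bound on the length of the free boundary of a minimal surface in a fixed compact manifold (obtained by comparison with geodesic balls, or by a monodromy/monotonicity argument), one gets a uniform area bound for all $\Sigma\in\mathcal S$. With a uniform area bound and a bound on the topological type (hence on the genus, via Gauss--Bonnet), the classical curvature estimates for minimal surfaces with free boundary --- due to work in the style of Schoen's curvature estimate, extended to the free boundary case --- yield a uniform $C^2$, and then by Schauder-type elliptic estimates a uniform $C^{l}$ bound on compact pieces away from, and up to, the boundary. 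Here one uses that $\partial M$ is a fixed smooth hypersurface meeting the surfaces orthogonally, so boundary regularity is governed by a reflection argument.

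Then I would run the \emph{compactness argument}. Given a sequence $\Sigma_j\in\mathcal S$, the uniform $C^l$ estimates let one extract, via Arzel\`a--Ascoli, a subsequence converging in $C^{l'}$ for every $l'<l$ (hence in $C^l$ after a diagonal argument, or by taking $l$ arbitrary) to a limiting surface $\Sigma_\infty$, which is again a compact properly embedded minimal surface with free boundary in $(M,\widetilde g)$. The potential loss of area (sheeting/multiplicity) is controlled precisely by the area bound together with the monotonicity formula; to rule out concentration one uses that there is no closed embedded minimal surface in $(M,\widetilde g)$ --- again Theorem \ref{thm:FL-isoperimetric} --- so no closed minimal surface can bubble off, and embeddedness of the limit follows from the embeddedness of the $\Sigma_j$ together with the curvature estimates (a standard maximum-principle argument prevents the limit from being an immersion with self-touching). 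One must also check the limit has the prescribed topological type, which follows since $C^l$-convergence with $l\ge 2$ is a diffeomorphism on each sheet once curvature is bounded. Finally, translating back through the conformal map $\widetilde g\mapsto g$ (a smooth diffeomorphism-compatible operation) gives $C^l$-compactness of $\mathcal S$ in $(M,g)$.

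The main obstacle I expect is the curvature estimate step: ensuring that a uniform interior \emph{and boundary} curvature bound holds for embedded minimal surfaces with free boundary, given only the area bound and the genus bound. In the closed case this is Schoen--Colding--Minicozzi theory, and in the free boundary case it has been worked out in the literature that Fraser--Li rely on; I would invoke that machinery, verifying that the hypotheses ($\partial M$ convex in the appropriate sense, which is exactly $\mathrm{II}_{\partial M}\ge k g_{\partial M}$ in the $\widetilde g$-metric, and $(M,\widetilde g)$ compact with smooth boundary) are met. Everything else --- the conformal reduction, the area bound, Arzel\`a--Ascoli, and the exclusion of closed bubbles --- is routine given the results already proved in Sections \ref{sec:isoperimetric}--\ref{sec:steklov}.
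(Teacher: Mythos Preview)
Your reduction to the conformal metric $\widetilde g = \e^{-2\varphi}g$ is correct and matches the paper, but the argument has two genuine gaps, both of which stem from the fact that you never invoke the Steklov eigenvalue estimate (Theorem \ref{thm:steklov-1-ricci}). That estimate is the \emph{key lemma} in the paper's proof, and it is used twice.

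First, the boundary-length bound. You claim a ``standard bound on the length of the free boundary \ldots\ obtained by comparison with geodesic balls, or by a monodromy/monotonicity argument.'' No such bound exists in this generality: monotonicity gives \emph{lower} area bounds, not upper bounds on $v_{\widetilde g,\partial\Sigma}(\partial\Sigma)$. The paper instead uses a Weinstock-type inequality (the argument of \cite[Corollary 3.2]{BW}) to get
\[
v_{\widetilde g,\partial\Sigma}(\partial\Sigma)\;\lesssim\;\frac{\mathrm{gen}(\Sigma)+\alpha}{\lambda_{1,\Sigma}^{\mathrm{Ste}}},
\]
and then Theorem \ref{thm:steklov-1-ricci} supplies the uniform lower bound $\lambda_{1,\Sigma}^{\mathrm{Ste}}\ge k/2$. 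Only after this does the isoperimetric inequality of Theorem \ref{thm:FL-isoperimetric} yield the area bound.

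Second, ruling out higher multiplicity. You appeal to ``Schoen's curvature estimate, extended to the free boundary case'' for a uniform $C^2$ bound, but Schoen's pointwise estimates require \emph{stability}, and by Proposition \ref{prop:unstable} every $\Sigma\in\mathcal S$ is $D$-unstable. The paper follows the Choi--Schoen route: it bounds the total curvature $\int_\Sigma|\widetilde{\mathrm{II}}_\Sigma|^2\,\d v_{\widetilde g,\Sigma}$ via Gauss--Bonnet (equation \eqref{eq:bw-thm-1-3}), obtains smooth convergence away from a finite concentration set $\mathcal N$, and removes the singularities. The limit could still be multiply covered; your remark that ``no closed minimal surface can bubble off'' does not exclude this. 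The paper rules out multiplicity $>1$ by constructing, on each $\Sigma_i$, test functions $\phi_i$ witnessing $\lambda_{1,\Sigma_i}^{\mathrm{Ste}}\to 0$, in direct contradiction with Theorem \ref{thm:steklov-1-ricci}. Without the Steklov lower bound you have no mechanism to exclude sheeting, and hence no way to conclude that the limit is embedded with the prescribed topological type.
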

\begin{proof}
By Corollary \ref{cor:diffeo-ball},
we see that $M$ is diffeomorphic to a three-dimensional ball.
This implies that $M$ is simply connected.
For $\Sigma \in \mathcal{S}$ and $\widetilde{g} := \e^{-2\varphi} g$,
we denote the sectional curvature on $M$ by $\mathrm{Sec}_{\widetilde{g}}$,
and the second fundamental form on $\Sigma$ by $\widetilde{\mathrm{II}}_\Sigma$,
and also the geodesic curvature on $\partial \Sigma$ by $\kappa_{\widetilde{g},\partial \Sigma}$.
It follows from the argument in \cite[Theorem 1.3]{BW} that 
\begin{align}\label{eq:bw-thm-1-3}
    \frac{1}{2}\int_\Sigma |\widetilde{\mathrm{II}}_\Sigma|^2 \ \d v_{\widetilde{g},\Sigma} = \int_\Sigma \mathrm{Sec}_{\widetilde{g}} \ \d v_{\widetilde{g},\Sigma} + \int_{\partial \Sigma} \kappa_{\widetilde{g},\partial \Sigma}\ \d v_{\widetilde{g},\partial\Sigma} - 2\pi (2- \alpha - \mathrm{gen}(\Sigma)),
\end{align}
where $\mathrm{gen}(\Sigma)$ is the number of genus of $\Sigma$ and $\alpha$ is the number of components of $\partial \Sigma$.
Together with Theorem \ref{thm:FL-isoperimetric},
the right-hand side of \eqref{eq:bw-thm-1-3} is estimated as follows:
\begin{align*}
    \int_\Sigma \mathrm{Sec}_{\widetilde{g}} \ \d v_{\widetilde{g},\Sigma} &\leq C_1 v_{\widetilde{g},\Sigma}(\Sigma) \leq C_2 v_{\widetilde{g},\partial \Sigma}(\partial \Sigma),\quad \int_{\partial \Sigma} \kappa_{\widetilde{g},\partial \Sigma}\ \d v_{\widetilde{g},\Sigma} \leq C_3 v_{\widetilde{g},\partial \Sigma}(\partial \Sigma),
\end{align*}
where $C_1, C_2, C_3$ are positive constants depending only on the geometry of $(M,g)$ and $\|\varphi\|_{C^2}$.
From the argument in \cite[Corollary 3.2]{BW} and Theorem \ref{thm:steklov-1-ricci},
we have
\begin{align*}
    v_{\widetilde{g},\partial \Sigma}(\partial \Sigma) \leq \frac{2\pi(\mathrm{gen}(\Sigma) + \alpha)}{\lambda^{\mathrm{Ste}}_{1,\Sigma}}\e^{7 \max \varphi} \leq \frac{4\pi(\mathrm{gen}(\Sigma) + \alpha)}{k}\,\e^{7\max \varphi}.
\end{align*}
Combining these with \eqref{eq:bw-thm-1-3},
we see that $\int_\Sigma |\widetilde{\mathrm{II}}_\Sigma|^2 \ \d v_{\widetilde{g},\Sigma}$ is bounded from above by a constant depending only on the topology of $\Sigma$,
the geometry of $(M,g)$ and $\|\varphi\|_{C^2}$.
Therefore,
for any sequence in $\mathcal{S}$,
the argument in \cite[Theorem 1.3]{BW} yields that there exists a subsequence $\{\Sigma_i\}$,
and a finite set of points $\mathcal{N}$ such that $\{\Sigma_i\}$ converges in smooth topology to a surface $\Sigma$ off of $\mathcal{N}$.
We may consider that the limit $\Sigma$ is properly embedded $D$-minimal surface with free boundary by the removal of singularity theorem in \cite[Theorem 4.1]{FL}.
If the multiplicity of the convergence is one,
then the Allard regularity theorem in \cite{GJ} with free boundary implies that the convergence is smooth everywhere even across $\mathcal{N}$.
If the multiplicity is greater than one,
as is constructed in \cite[Theorem 1.3]{BW},
there exists a function $\phi_i$ on each $\Sigma_i$ such that 
\begin{align*}
    \lambda_{1,\Sigma_i}^{\mathrm{Ste}} \leq \frac{\int_{\Sigma_i} |\nabla_{\Sigma_i} \phi_i|^2 \ \d \mu_{\Sigma_i}}{\int_{\partial \Sigma_i}\phi_i^2 \ \d \mu_{\partial \Sigma_i}}\rightarrow 0
\end{align*}
as $i\rightarrow \infty$,
where $\mu := \e^{-(n-1)\varphi}v_g$.
This contradicts with Theorem \ref{thm:steklov-1-ricci}.
We arrive at the desired assertion.
\end{proof}
\begin{remark}
We refer to Fraser-Li \cite[Theorem 6.1]{FL} for the unweighted case $f \equiv 0$ and Barbosa-Wei \cite[Theorem 1.3]{BW} for the weighted case with $N = \infty$.
This is the generalization of them to the case $N = 1$.
\end{remark}
Lastly,
we write down an application to substatic triples:
\begin{corollary}\label{cor:substatic-compactness}
Let $((M,g),V)$ be a three-dimensional compact substatic triple with boundary.
For $k > 0$,
we assume 
\begin{align*}
    V \mathrm{II}_{\partial M} - V_\nu\, g_{\partial M} \geq k \,g_{\partial M},\quad H_{\partial M} > 0,
\end{align*}
where $\nu := \nu(\partial M)$.
Let $\mathcal{S}$ be the space of compact properly embedded minimal surfaces of fixed topological type.
Then $\mathcal{S}$ is compact in the $C^l$-topology for any $l\geq 2$.
\end{corollary}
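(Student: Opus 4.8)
The plan is to reduce the statement to Theorem \ref{thm:compactness} by exactly the conformal change already used for the isoperimetric inequality in Corollary \ref{cor:substatic-isoperimetric}. First I would set $\varphi := \log V$, $\widetilde{g} := \e^{-2\varphi}g$, and $D^* := \widetilde{\nabla}^{-\varphi}$, where $\widetilde{\nabla}$ is the Levi-Civita connection of $\widetilde{g}$. By Proposition \ref{prop:dual-relation}, the substatic condition $V\Ric - \Hess V + (\Delta V)g \geq 0$ is equivalent to $\Ric^{D^*}_{\widetilde{g}} = \Ric - \frac{\Hess V}{V} + \frac{\Delta V}{V}g \geq 0$, and $H^{D^*}_{\widetilde{g},\Sigma} = V H_{\Sigma}$, so that a surface is $D^*$-minimal in $(M,\widetilde{g})$ if and only if it is minimal in $(M,g)$; in particular $H^{D^*}_{\widetilde{g},\partial M} = V H_{\partial M} > 0$ since $V>0$.

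Next I would translate the boundary pinching condition. From the conformal transformation rule for the second fundamental form recalled in the proof of Proposition \ref{prop:dual-relation} (cf. \cite[(2.6)]{BW}), for a $g_{\partial M}$-orthonormal frame $\{e_i\}$ with $\tilde e_i := \e^\varphi e_i$ (which is $\widetilde{g}_{\partial M}$-orthonormal) one obtains $\widetilde{\mathrm{II}}_{\partial M}(\tilde e_i,\tilde e_j) = \e^\varphi\bigl(\mathrm{II}_{\partial M}(e_i,e_j) - \varphi_\nu\,\delta_{ij}\bigr) = V\,\mathrm{II}_{\partial M}(e_i,e_j) - V_\nu\,\delta_{ij}$, using $\e^\varphi = V$ and $\varphi_\nu = V_\nu/V$. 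Hence the hypothesis $V\,\mathrm{II}_{\partial M} - V_\nu\, g_{\partial M} \geq k\, g_{\partial M}$ is precisely $\widetilde{\mathrm{II}}_{\partial M} \geq k\,\widetilde{g}_{\partial M}$, and so $(M,\widetilde{g})$ together with $\varphi$ satisfies all three hypotheses $\Ric^{D^*}_{\widetilde{g}} \geq 0$, $\widetilde{\mathrm{II}}_{\partial M} \geq k\,\widetilde{g}_{\partial M}$, $H^{D^*}_{\widetilde{g},\partial M} > 0$ of Theorem \ref{thm:compactness}.

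Applying Theorem \ref{thm:compactness} to $(M,\widetilde{g})$ and $D^*$ then shows that the space of compact properly embedded $D^*$-minimal surfaces of fixed topological type is compact in the $C^l$-topology for every $l\geq 2$. Since $\Sigma$ is $D^*$-minimal in $(M,\widetilde{g})$ exactly when $\Sigma$ is minimal in $(M,g)$, this space coincides with $\mathcal{S}$; and because $M$ is compact, $g$ and $\widetilde{g} = \e^{-2\log V}g$ are smoothly, hence $C^l$-, equivalent, so the $C^l$-topology on $\mathcal{S}$ induced by $\widetilde{g}$ agrees with the one induced by $g$, giving the claim. The argument is essentially mechanical: the only point that requires care is the bookkeeping of the conformal change in the boundary second fundamental form, together with the observation that the $C^l$-topologies for $g$ and $\widetilde{g}$ coincide on compact $M$, so I do not expect any serious obstacle beyond faithfully importing the proof of Theorem \ref{thm:compactness}.
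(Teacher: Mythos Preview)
Your proposal is correct and follows precisely the route the paper intends: the corollary is stated without proof as an immediate application of Theorem~\ref{thm:compactness}, and your reduction via the conformal change $\widetilde{g}=\e^{-2\varphi}g$ with $D^*=\widetilde{\nabla}^{-\varphi}$ mirrors exactly the derivations of Corollaries~\ref{cor:substatic-isoperimetric} and~\ref{cor:steklov-substatic}. The translation of the boundary pinching $V\,\mathrm{II}_{\partial M}-V_\nu\,g_{\partial M}\geq k\,g_{\partial M}$ into $\widetilde{\mathrm{II}}_{\partial M}\geq k\,\widetilde{g}_{\partial M}$ via the conformal second fundamental form formula is correct, and since conformal changes preserve angles the free boundary condition and the notion of proper embedding transfer as well.
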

\subsection*{{\rm{Acknowledgements}}}

The author would like to express gratitude to Professor Yohei Sakurai and Ryu Ueno for fruitful discussions.
Also,
the authour thank Professor Shin-ichi Ohta for valuable comments.
The work was supported by JST, the establishment of university fellowships towards the creation of science technology innovation, Grant Number JPMJFS2125.

\bibliographystyle{amsplain}
\bibliography{ref} 
\end{document}